\newtheorem {Lemma}{Lemma}[section]
\newtheorem {Theorem} {Theorem}[section]
\newtheorem {Claim} {Claim}[section]
\newtheorem {Problem}{Problem}[section]
\numberwithin{equation}{section}
\begin{document}

\title{Extremal distance spectral radius of graphs with fixed size}

\author{
Hongying Lin$^{a}$\footnote{E-mail: linhy99@scut.edu.cn},
Bo Zhou$^{b}$\footnote{
E-mail: zhoubo@m.scnu.edu.cn}  \\
$^{a}$School of Mathematics, South China  University of Technology, \\
Guangzhou 510641, P.R. China\\
$^{b}$School of  Mathematical Sciences, South China Normal University, \\
 Guangzhou 510631, P.R. China
 }

\date{}
\maketitle

\begin{abstract}
Let $m$ be a positive integer. Brualdi and Hoffman proposed the problem to determine the (connected) graphs with maximum spectral radius in a given graph class and they posed a conjecture for the class of graphs with given size $m$.
After partial results due to
Friedland and Stanley,
Rowlinson completely confirmed the conjecture.
The distance spectral radius of a connected graph is the largest eigenvalue of its distance matrix.
We investigate the problem to determine the connected graphs with minimum distance spectral radius in the class of graphs with size $m$.
Given $m$, there is exactly one  positive integer $n$ such that ${n-1\choose 2} <m\leq {n\choose 2}$.
We establish some structural properties of the extremal graphs for all $m$ and solve the problem for
${n-1\choose 2}+\max\{\frac{n-6}{2},1\}\le m\leq {n\choose 2}$. We give a conjecture for the remaining case.
To prove the main results, we also determine the the complements of  forests of fixed order with large and small distance spectral radius.
\\ \\
{\bf MSC:} 05C12, 05C50, 15A48\\ \\
{\bf Key words:} distance spectral radius, complement of a forest, graph size
\end{abstract}

\section{Introduction}

Throughout this paper, we consider simple graphs. Let $G$ be a connected graph with vertex
set $V(G)$ and edge set $E(G)$. Denote $m(G)$ the size of $G$.
The distance between vertices $u,v\in V(G)$,
denoted by $d_G(u,v)$, is the length of a shortest path between them.
The distance matrix of $G$, denoted by $D(G)$, is the matrix
$D(G)=(d_G(u,v))_{u,v\in V(G)}$.
Since $D(G)$ is real and
symmetric, its eigenvalues  are all real.
The  distance spectral radius of $G$, denoted by $\rho(G)$, is the largest
eigenvalue of $D(G)$. For $|V(G)|\ge 2$,  since $D(G)$ is
irreducible, we have by the Perron-Frobenius theorem that $\rho(G)$
is simple and positive, and there is a unique positive unit eigenvector $x(G)$ of $D(G)$
corresponding to $\rho(G)$, which we call the distance Perron
vector of $G$.

The study of  eigenvalues of the distance matrix of a connected graph  dates back to the
classical work of Graham and Pollak \cite{GP}, Graham and Lov\'asz~\cite{GL} and Edelberg et al.~\cite{EGG}.
The distance spectral radius was used as a
molecular descriptor \cite{BCM}.
Ruzieh and Powers \cite{SuPo90} and Stevanovi\' c and  Ili\' c \cite{SI} showed that among the connected
graphs of order $n$,
 the complete graph $K_n$ is the unique graph with
minimum distance spectral radius, while the path $P_n$ is the unique
graph with maximum distance spectral radius, where the uniqueness of the path was established in \cite{SI}. Related to many problems and properties of graphs such as Laplacian spectral properties \cite{Mer} and graph parameters based on distance \cite{AH0},
the distance eigenvalues and especially of the distance spectral radius
received much attention, see the surveys \cite{AH-2,LS} (and the references therein) and the very recent  \cite{CK}. We mention that such research was also extended to Laplacian and signless Laplacian versions \cite{RA},
hypergraphs \cite{WI} and other distances such as Steiner distance \cite{CT}.


Let $m$ be a positive integer. There is exactly one  positive integer $n$ such that ${n-1\choose 2} <m\leq {n\choose 2}$.
It is easily seen that  $\frac{1+\sqrt{8m}}{2}\leq n<\frac{3+\sqrt{8m+1}}{2}$.
Thus $n=\left\lceil\frac{1+\sqrt{8m}}{2}\right\rceil$.
After showing that the maximum adjacency spectral radius  of a graph with ${n\choose 2}$ edges, $n\ge 2$, is $n-1$, with equality precisely when the only nontrivial component of the graph is $K_{n}$,
Brualdi and Hoffman conjectured that among all graphs with $m$ edges, ${n-1\choose 2}<m<{n\choose 2}$, the maximal index is attained precisely when the only nontrivial component is the graph obtained from $K_{n-1}$ by adding one new vertex of degree $m-{n-1\choose 2}$. Friedland \cite{Fr} and Stanley \cite{St}
gave some partial results on this conjecture, which was confirmed finally  by
Rowlinson \cite{Row}.  Let $\mathbb{G}(m)$ be the set of connected graphs with $m$ edges.
Evidently, $ \mathbb{G}(m)=\{P_{m+1}\}$ when $m=1,2$. Thus, we assume in the following that $m\geq 3$. Then $m={n-1\choose 2}+s$ with $1\leq s\leq n-1$. So the graph $G_m$ obtained from $K_{n-1}$ by adding a new vertex of degree $s$ is the unique graph achieving the maximum adjacency spectral radius.

In this paper, we study the following problem.

\begin{Problem}
Determine the graphs in $\mathbb{G}(m)$ with maximum and minimum distance spectral radius.
\end{Problem}

It turns out that the maximum part is easy. However, the minimum is much more difficult. By existing results and experiments, the graphs with maximum (minimum, resp.) adjacency spectral radius happen to be the graphs with minimum (maximum, resp.) distance spectral radius over many classes of graphs, including trees and graphs with given parameters such as order, connectivity, clique number, etc. Our results show that this is not the case in the class $\mathbb{G}(m)$ for $s<n-2$. The main results are listed below.

For integer $n\geq 4$, let $A_n$ be the tree obtained from a path $v_1\ldots v_{n-1}$ by add a new vertex $v_n$ and a new edge $v_nv_2$.
For integer $n\geq 6$, let $B_n$ be the tree obtained from a path $v_1\ldots v_{n-1}$ by add a new vertex $v_n$ and a new edge $v_nv_3$.

\begin{Theorem}\label{max=1}
Let $G\in \mathbb{G}(m)\setminus \{P_{m+1}, A_{m+1}\}$, where $m\ge 5$.
Then
$\rho(G)\leq \rho(B_{m+1})<\rho(A_{m+1})<\rho(P_{m+1})$ with equality in the first inequality if and only if $G\cong B_{m+1}$.
\end{Theorem}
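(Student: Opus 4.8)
The plan is to first collapse the problem to trees on $m+1$ vertices, and then to carry out a fine comparison among such trees by pendant-path grafting.

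\medskip
\noindent\textbf{Reduction to trees.} A connected graph with $m$ edges has at most $m+1$ vertices, with equality precisely for trees. If $G\in\mathbb{G}(m)$ is not a tree, then it has order $n\le m$ and contains a cycle, so by the cited result of Stevanovi\'c and Ili\'c (the path uniquely maximizes $\rho$ among connected graphs of fixed order) we get $\rho(G)<\rho(P_n)\le\rho(P_m)$. Now $P_m$ is an isometric induced subgraph of $B_{m+1}$, since the pendant vertex $v_{m+1}$ at $v_3$ creates no shortcut among $v_1,\dots,v_m$; hence $D(P_m)$ is a proper principal submatrix of the irreducible nonnegative matrix $D(B_{m+1})$, and Perron--Frobenius gives $\rho(P_m)<\rho(B_{m+1})$. (The same submatrix argument shows $\rho(P_n)$ is strictly increasing in $n$.) Thus every non-tree member of $\mathbb{G}(m)$ already satisfies $\rho(G)<\rho(B_{m+1})$, and it remains to order the trees on $m+1$ vertices.

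\medskip
\noindent\textbf{Grafting lemma and the three named trees.} The workhorse is the leg-transfer lemma for the distance spectral radius: if two pendant paths of lengths $p\ge q\ge 1$ are attached at a common vertex, then replacing them by pendant paths of lengths $p+1$ and $q-1$ strictly increases $\rho$ (proved by comparing the Rayleigh quotient $x^{\top}D x$ for the distance Perron vector $x$). Writing a spider with center of degree three as $S(\ell_1,\ell_2,\ell_3)$ (three pendant paths at one vertex, with $\ell_1+\ell_2+\ell_3=m$), we have $A_{m+1}=S(1,1,m-2)$ and $B_{m+1}=S(2,1,m-3)$. One transfer carries $B_{m+1}$ to $A_{m+1}$ and a second carries $A_{m+1}$ to $P_{m+1}$, giving at once $\rho(B_{m+1})<\rho(A_{m+1})<\rho(P_{m+1})$. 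For an arbitrary three-legged spider $S\ne A_{m+1},B_{m+1}$ with legs $\ell_1\le\ell_2\le\ell_3$, repeatedly transferring a unit from the smallest leg to the largest strictly increases $\rho$ and terminates at some $S(1,\ell_2,\cdot)$; since $S\ne A_{m+1}$ forces $\ell_2\ge 2$ at this stage, further transfers from the middle leg reach $B_{m+1}$, so $\rho(S)<\rho(B_{m+1})$. A spider with a center of degree $\ge 4$ is first reduced to a three-legged one by merging two legs (again a sequence of $\rho$-increasing transfers), chosen so as to land on $B_{m+1}$ or below.

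\medskip
\noindent\textbf{Non-spider trees and the main obstacle.} The remaining and hardest case is a tree $T\not\cong P_{m+1},A_{m+1},B_{m+1}$ with at least two branch vertices. Here one needs a grafting lemma that compares the distance Perron entries at the two ends of the path joining two branch vertices and relocates an entire branch toward the end carrying the larger entry; iterating coalesces all branching into a single vertex while strictly increasing $\rho$, thereby reducing $T$ to the spider case. The crux is to control \emph{where} this reduction lands. The theorem asserts a spectral gap, namely that no tree has distance spectral radius strictly between $\rho(B_{m+1})$ and $\rho(A_{m+1})$, so one must rule out the possibility that a tree $T\ne A_{m+1}$ reduces only as far as $A_{m+1}$ (which would yield the weaker bound $\rho(T)<\rho(A_{m+1})$). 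This forces a position-sensitive case analysis of how close the branching can sit to an end of a diameter path, together with a separate verification that every configuration other than a single pendant at the second vertex is dominated by $B_{m+1}$. Tracking equality through each grafting step — every transfer being strict — then yields $\rho(T)<\rho(B_{m+1})$ for all such $T$, with equality in the theorem holding exactly when $G\cong B_{m+1}$. I expect this gap-avoidance in the multiple-branch-vertex case to be the principal difficulty, since it is precisely where the naive grafting argument degrades from the sharp bound $\rho(B_{m+1})$ to the weaker $\rho(A_{m+1})$.
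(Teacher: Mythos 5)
Your reduction to trees is exactly the paper's: a non-tree $G\in\mathbb{G}(m)$ has order $n\le m$, so $\rho(G)\le\rho(P_n)\le\rho(P_m)<\rho(B_{m+1})$, the last inequality because $D(P_m)$ is a proper principal submatrix of the irreducible matrix $D(B_{m+1})$. From that point on, however, the two arguments diverge sharply in economy. The paper does nothing further: it invokes the known ordering of trees by distance spectral radius (its Lemma~\ref{p}, quoted from \cite{SI,WZ}), which says precisely that among trees of order $n\ge 6$ other than $P_n$ and $A_n$, the maximum of $\rho$ is attained uniquely by $B_n$. That citation is the entire content of the tree case. You instead set out to re-derive this ordering from scratch by pendant-path grafting, and this is where your proposal has a genuine gap.

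Concretely: your treatment of three-legged spiders is fine (the chain of leg transfers from $S(\ell_1,\ell_2,\ell_3)$ down to $S(1,2,m-3)=B_{m+1}$ is strictly $\rho$-increasing at every step, and the hypothesis $S\not\cong A_{m+1},B_{m+1}$ guarantees at least one step occurs). But for spiders of degree at least $4$ you only assert that the merging can be ``chosen so as to land on $B_{m+1}$ or below,'' and for trees with two or more branch vertices you explicitly identify the obstruction — that naive branch relocation may terminate at $A_{m+1}$ and thus only yields $\rho(T)<\rho(A_{m+1})$, which does not prove the theorem — and then describe, rather than carry out, the ``position-sensitive case analysis'' needed to avoid it. As written, the hardest case of your argument is a plan, not a proof. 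The missing piece is exactly the statement of the paper's Lemma~\ref{p}; if you cite that result (as the paper does), your proof closes and becomes essentially identical to the paper's, whereas completing your self-contained grafting argument would amount to reproving the Stevanovi\'c--Ili\'c / Wang--Zhou ordering, which is substantially more work than the theorem itself requires.
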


For a graph $G$, we denote by $\overline{G}$ the complement of $G$. Let $\Delta(G)$ and $\delta(G)$ be the maximum degree  and minimum degree of a graph $G$, respectively.

For positive integers $n\geq 2$ and $1\leq c\leq n$, write
\[
P_{n,c}=\overline{\left(c+c\lfloor\tfrac{n}{c}\rfloor-n\right)P_{\lfloor\frac{n}{c}\rfloor}\cup \left(n-c\lfloor\tfrac{n}{c}\rfloor\right)P_{\lceil\frac{n}{c}\rceil}}\,.
\]
Obviously, $P_{n,1}=\overline{P_n}$ and $P_{n,n}=K_n$.

\begin{Theorem}\label{size2023-result1=1}
Let $G\in \mathbb{G}(m)$, $n=\left\lceil\frac{1+\sqrt{8m}}{2}\right\rceil$ and
$s=m-{n-1\choose 2}$, where $m\ge 3$. Suppose that $\rho(G)$ is as small as possible. Then the order of $G$ is $n$,
and
\begin{enumerate}
\item[(i)] if $s\geq \frac{n-1}{2}$, then $\Delta(G)=n-1$;

\item[(ii)] if $s=\frac{n-2}{2}$ and $n$ is even, then $\Delta(G)=\delta(G)=n-2$;

\item[(iii)] if $1\leq s<\frac{n-2}{2} $, then
$\Delta(G)=n-2$ and $\delta(G)=n-3$,  i.e., each component of  $\overline{G}$
is either a cycle or a nontrivial paths, and there are exactly $s+1$ components being paths in $\overline{G}$.
\end{enumerate}
\end{Theorem}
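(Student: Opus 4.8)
The plan is to pass to the complement and thereby turn a distance-spectral minimization into an eigenvalue-minimization on a sparse graph. Writing $A(H)$ for the adjacency matrix of a graph $H$, $J$ for the all-ones matrix and $I$ for the identity, my first goal is two reductions: that the minimizer $G$ has order exactly $n$, and that it has diameter at most $2$. For the order I would argue that enlarging the vertex set while keeping $m$ edges and connectivity makes the graph sparser and its distances larger; quantitatively, for a diameter-$2$ graph of order $N$ each row sum of $D(G)$ is $2(N-1)-\deg(v)$, so the all-ones Rayleigh bound gives $\rho(G)\ge 2N-2-\frac{2m}{N}$, which is strictly increasing in $N$, and I would combine this with an explicit near-complete competitor of order $n$ to exclude $N>n$. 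For the diameter I would use entrywise monotonicity of $\rho$ under distance decrease: if some pair lies at distance $\ge 3$, a suitable edge-switch creating a shortcut without lengthening other distances strictly lowers $\rho$, contradicting minimality.

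Once diameter $\le 2$ is available, the key identity is $D(G)=(J-I)+A(\overline{G})$, so $\rho(G)=\lambda_1\big((J-I)+A(\overline{G})\big)$, where $\overline G$ is a graph on $n$ vertices with exactly $e:=\binom{n}{2}-m=(n-1)-s$ edges. Minimizing $\rho(G)$ thus becomes minimizing the largest eigenvalue of $A(\overline G)$ shifted by the fixed matrix $J-I$, over all admissible $\overline G$ with $e$ edges, and all three conclusions are statements about the degree sequence of $\overline G$ through $\deg_G(v)=n-1-\deg_{\overline G}(v)$, i.e. $\Delta(G)=n-1-\delta(\overline G)$ and $\delta(G)=n-1-\Delta(\overline G)$.

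Part (i) then falls out by counting: when $s\ge\frac{n-1}{2}$ we have $e\le\frac{n-1}{2}<\frac n2$, so $\overline G$ cannot meet every vertex and must contain an isolated vertex, forcing $\Delta(G)=n-1$. Parts (ii) and (iii) require genuine spectral comparisons. In case (ii) we have $e=\frac n2$ exactly, so a complement without an isolated vertex is forced to be a perfect matching; the content is to show the minimizer indeed has no isolated vertex in $\overline G$, i.e. that spreading the $\frac n2$ edges into a matching beats leaving a universal vertex in $G$. In case (iii) we have $e>\frac n2$, and I must show every vertex of $\overline G$ has degree in $\{1,2\}$: no isolated vertex (so $\Delta(G)=n-2$) and maximum degree at most $2$, which by the edge count is then exactly $2$ (so $\delta(G)=n-3$); after this the identity $e=n-(s+1)$ forces exactly $s+1$ path components among the resulting paths and cycles. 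The engine for (ii) and (iii) is a degree-balancing lemma: relocating an edge of $\overline G$ from a higher-degree endpoint to a lower-degree or isolated vertex strictly decreases $\lambda_1\big((J-I)+A(\overline G)\big)$; I would obtain the extremal degree pattern by iterating it, appealing also to the separately established extremal complements of forests to organize the path-versus-cycle structure.

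The main obstacle is proving this balancing lemma with the inequality in the correct direction. Since the problem is a minimization, the customary one-line device—inserting the old Perron vector into the new Rayleigh quotient—only yields a lower bound on the new radius and is therefore useless here. I expect to need a sharper argument: either to realize each balancing move as a genuinely distance-nonincreasing edge-switch, so that entrywise monotonicity of $\rho$ applies, or to compare the two distance matrices via the eigen-equation $\rho\,x_w=(S-x_w)+\sum_{v\sim_{\overline G}w}x_v$ (with $S=\sum_v x_v$), which shows that high $\overline G$-degree vertices carry large Perron weight, and to use this to control the sign of the quadratic-form difference at the new Perron vector. The order-$n$ reduction is the secondary difficulty, because for $N=n+1$ the Rayleigh lower bound and the order-$n$ optimum are both of leading order $n$, so that comparison is tight and must be handled with care.
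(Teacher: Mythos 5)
Your skeleton matches the paper's: establish the order via the Wiener/transmission bound $\rho(G)\ge \frac{2W(G)}{n_G}\ge 2n_G-2-\frac{2m}{n_G}$ (which holds for every connected graph, not only those of diameter $2$, since non-adjacent pairs contribute distance at least $2$) compared against $Tr_{\max}(P_{n,s+1})\le n+1$, and then work in the complement via $D(G)=(J-I)+A(\overline G)$. The diameter-$2$ reduction you plan to prove by an edge-switch is in fact automatic by counting: $d_G(u,v)\ge 3$ would force $\overline G$ to contain the edge $uv$ together with an edge from each of the other $n-2$ vertices to $u$ or $v$, hence $e(\overline G)\ge n-1$, while $e(\overline G)=n-1-s\le n-2$. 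Your counting proof of (i) ($2e<n$ forces an isolated vertex of $\overline G$) is correct and simpler than the paper's transmission argument, and the purely combinatorial deductions in (ii) and (iii) from ``$\delta(\overline G)\ge 1$ and $\Delta(\overline G)\le 2$'' are also right.

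The genuine gap is that the entire spectral content of (ii) and (iii) is delegated to a ``degree-balancing lemma'' that you state but do not prove and whose difficulty you yourself flag; moreover, a single uniform local lemma is not how this is closed, and I doubt it is provable in the generality you state. The paper needs three distinct arguments. To get $\delta(\overline G)\ge 1$, it moves one $\overline G$-edge from a vertex $u$ with $\deg_{\overline G}(u)\ge 3$ onto the isolated vertex $v_1$ and proves $x_u>x_{v_1}$ for the Perron vector $x$ of the \emph{modified} graph by a two-stage eigenequation estimate, first $(\rho+1)(x_{u_2}+x_{u_3}-x_{u_1})>0$ and then $(\rho+1)(x_u-x_{v_1})=\sum_{w\in N_{\overline{G'}}(u)}x_w-x_{u_1}>0$; this uses crucially that the target vertex ends with $\overline G$-degree exactly $1$, so a comparison of Perron-entry sums (not merely of degrees) is available. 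Your general ``move toward a lower-degree vertex'' version would require the analogous Perron inequality when the degree gap is only $1$, which does not follow from degrees alone. To get $\Delta(\overline G)\le 2$, the paper does not use a local move at all: it replaces all cyclic components of $\overline G$ by a single cycle $C_b$ of the same total order and size, exploits that the Perron vector of the new graph is constant on $C_b$ so that the quadratic form is unchanged, and extracts strictness from the eigenequation at a minimum-degree vertex; a cycle component cannot be dismantled by relocating one edge without first knowing where the freed endpoint should go. And in (ii) the case $\Delta(G)=n-1$ is excluded by testing $D(G)$ against the all-ones vector (the Perron vector of the transmission-regular $P_{n,n/2}$) and again using the eigenequation at a minimum-degree vertex. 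Until you supply arguments at this level for each of these three configurations, the plan is incomplete exactly where the theorem is hard.
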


\begin{Theorem}\label{size2025-result3}
Let $G\in \mathbb{G}(m)$, $n=\left\lceil\frac{1+\sqrt{8m}}{2}\right\rceil$ and
$s=m-{n-1\choose 2}$, where $m\ge 3$.
If $\max\{\frac{n-6}{2},1\}\leq s\leq n-1$, then $\rho(G)\geq \rho(P_{n,s+1})$ with equality if and only if $G\cong P_{n,s+1}$.
\end{Theorem}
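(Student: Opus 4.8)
The plan is to exploit that every graph in play is the complement of a very sparse graph, hence has diameter at most $2$. By Theorem \ref{size2023-result1=1} an extremal $G$ (one minimizing $\rho$) has order $n$, and its complement $H:=\overline G$ has maximum degree at most $2$; for $n\ge 5$ (the finitely many smaller cases being checked by hand) any two non-adjacent vertices of $G$ then share a common neighbour, so $\operatorname{diam}(G)\le 2$. Writing $A(\cdot)$ for the adjacency matrix, $J$ for the all-ones matrix and $I$ for the identity, a diameter-$2$ graph satisfies $D(G)=2(J-I)-A(G)$, and since $A(G)=J-I-A(H)$ we obtain the clean identity $D(G)=(J-I)+A(H)$, whence $\rho(G)=\lambda_{\max}\!\bigl((J-I)+A(H)\bigr)$. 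If $x$ is the positive Perron vector and $\sigma=\mathbf 1^\top x$ (the sum of the entries of $x$), the eigen-equation becomes $(\rho(G)+1)\,x_u=\sigma+\sum_{v\sim_H u}x_v$. This is the engine of the argument: every edge modification of $H$ affects $\rho(G)$ only through $A(H)$; because $(J-I)+A(H)$ is nonnegative and irreducible (as $G$ is connected), deleting an edge of $H$ strictly decreases $\rho(G)$, and the eigen-equation shows the Perron entries of a path increase from its endpoints toward its centre.

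I would next reduce to a \emph{balanced linear forest}. Part (ii) of Theorem \ref{size2023-result1=1} is immediate: if $s=\tfrac{n-2}2$ with $n$ even then $G$ is $(n-2)$-regular, so $H$ is a perfect matching $\tfrac n2P_2$, which is exactly the forest defining $P_{n,n/2}=P_{n,s+1}$. In part (iii), i.e. $\tfrac{n-6}2\le s<\tfrac{n-2}2$, the components of $H$ are $s+1$ nontrivial paths together with some cycles. Since each path has at least two vertices, at least $2(s+1)$ vertices lie on paths, leaving at most $n-2(s+1)\le 4$ vertices for cycles; hence $H$ has \emph{at most one} cycle, and it is a $C_3$ or a $C_4$. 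This is exactly what the hypothesis $s\ge\tfrac{n-6}2$ buys, and it confines the cyclic discussion to two explicit configurations.

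The first main step is to dissolve that cycle. Keeping both the edge count $n-s-1$ and the number $s+1$ of path components fixed, I replace the $C_3$ (resp. $C_4$) and the paths by a linear forest in which the three (resp. four) former cycle-vertices are appended to existing paths, deleting the cycle's edges and re-adding the same number of pendant edges. Evaluating $x^\top\!\bigl((J-I)+A(H')\bigr)x$ for the acyclic replacement $H'$ against $\rho(G)=x^\top\!\bigl((J-I)+A(H)\bigr)x$, and using that a $C_3$ or $C_4$ carries adjacency spectral radius $2$ while a path endpoint contributes strictly less, the swap is seen to lower $\rho$; both cases are small enough to check directly. The same edge-move inequalities handle part (i): there $\Delta(G)=n-1$ only guarantees a dominating vertex, so I would first show that $H$ is in fact a linear forest (no cycles, no component of order $\ge 3$) before balancing, the edge count then forcing the $P_1$/$P_2$ pattern of $P_{n,s+1}$.

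What remains is the balancing statement, which is precisely the auxiliary result on complements of forests: among all linear forests on $n$ vertices with exactly $s+1$ path components, $\rho(\overline{\,\cdot\,})$ is uniquely minimized by the balanced forest, whose complement is $P_{n,s+1}$. I would prove it by a vertex-transfer (path-rotation) argument: if two components differ in order by at least $2$, moving an end-vertex from the longer path to the shorter strictly decreases $\lambda_{\max}\!\bigl((J-I)+A(H)\bigr)$; iterating reaches the balanced configuration, and strictness gives uniqueness. I expect this transfer inequality to be the main obstacle. Deleting a single edge is monotone for free, but the transfer moves two edges at once, and the $+J$ term couples every coordinate, so no purely local comparison suffices; the decisive input is the quantitative monotonicity of the Perron entries along a path established in the first paragraph, which must be made sharp enough to overcome the global coupling. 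It is also this coupling that makes the small-$s$ regime delicate and is ultimately why the clean conclusion is available only for $s\ge\max\{\tfrac{n-6}2,1\}$.
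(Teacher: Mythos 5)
Your reformulation $D(G)=(J-I)+A(\overline G)$ for diameter-two graphs is correct and is implicitly the engine of the paper's proof too, and your overall architecture (structure theorem $\Rightarrow$ at most one short cycle in $\overline G$ $\Rightarrow$ eliminate the cycle $\Rightarrow$ balance a linear forest) matches the paper's treatment of cases (ii) and (iii). But the proposal has genuine gaps at exactly the points where the work lies. First, your opening claim that Theorem \ref{size2023-result1=1} gives $\Delta(\overline G)\le 2$ is false in case (i): for $s\ge\frac{n-1}{2}$ the theorem only yields $\Delta(G)=n-1$, i.e.\ a dominating vertex, and your promised repair (``first show that $H$ is a linear forest'') is never carried out; that reduction is itself a substantive argument (the paper avoids it entirely in this range, instead writing $G\cong K_{2s+2-n}\vee H$ after a degree count and comparing against $P_{n,s+1}=K_{2s+2-n}\vee\widetilde K_{2n-2s-2}$ with a test vector built from the Perron vector of $P_{n,s+1}$, then using the eigenequation at a minimum-degree vertex to force equality). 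Second, the cycle-dissolving step is justified only by the heuristic that ``a $C_3$ or $C_4$ carries adjacency spectral radius $2$ while a path endpoint contributes strictly less''; this does not translate into a proof because the $J-I$ term couples all coordinates, and the actual gaps are tiny (the paper's own computations give, e.g., $7.4641$ versus $7.4553$ at $n=7$ and differences in the third decimal at $n=8,10$). The paper needs a specific edge-exchange evaluated on the Perron vector of $P_{n,s+1}$, whose symmetry (Lemma \ref{permutation}) makes the Rayleigh difference collapse to $3(x_{v_{11}}-x_{v_{12}})^2\ge 0$, plus an eigenequation argument to get strictness — and even then it resorts to direct computation for the smallest $n$. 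Your sketch does not supply a mechanism that survives the coupling.

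Third, and most importantly, the balancing statement — among complements of linear forests with a fixed number of components, the balanced one uniquely minimizes $\rho$ — is Theorem \ref{forest-min} of the paper (Claims \ref{ca1} and \ref{ca2}), and you explicitly defer it as ``the main obstacle'' without proving it. Its proof is not a one-line vertex transfer: the paper runs an induction along the two paths comparing Perron entries $x_{u_{p-1-i}}$ and $x_{v_{q-i}}$ pairwise, with a separate eigenequation argument at each stage to rule out equality, before the final application of Lemma \ref{trans1}. Since this lemma is the load-bearing component of the whole theorem (it is what turns ``$\overline G$ is a forest of $s+1$ paths'' into ``$G\cong P_{n,s+1}$''), leaving it unproved means the proposal does not establish the statement. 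In summary: the skeleton is right, but the three steps you defer (case (i) structure, the cycle-versus-path comparison, and the balancing inequality) are precisely where the paper's proof does its work, and none of them follows from what you have written.
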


To obtain these results, we also study the graphs which are the complements of  forests with large and small  distance spectral radius, respectively. At the end of the paper, we propose a conjecture on the extremal graphs for $s<\frac{n-6}{2}$.

\section{Preliminaries}

Let $G$ be a connected graph with $V(G)=\{v_1,\dots,v_n\}$. A column
vector $x=(x_{v_1},\dots, x_{v_n})^\top\in \mathbb{R}^n$ (whether it is the distance
Perron vector of $G$ or not) can be
considered as a function defined on $V(G)$ which maps vertex $v_i$
to $x_{v_i}$, i.e., $x(v_i)=x_{v_i}$ for $i=1,\dots,n$. Then
\[
x^\top D(G)x=\sum_{\{u,v\}\subseteq V(G)} 2d_G(u,v)x_ux_v.
\]
For a unit column vector $x\in\mathbb{R}^n$ with at least one nonnegative entry, by Rayleigh's principle, we
have
\[
\rho(G)\ge x^{\top}D(G)x
\]
with equality if and only if $x$ is the distance
Perron vector of $G$. In this case,
\[
\rho(G)
x_u=\sum_{v\in V(G)}d_G(u,v)x_v,
\]
which we call the $(\rho(G), x)$-eigenequation (at $u$).


For a connected graph  $G$ with $v\in V(G)$, let $\delta_G(v)$ be the  degree of  $v$ in $G$, $N_G(v)$ the set of neighbors of $v$ in $G$,
and $Tr_G(v)=\sum_{w\in V(G)}d_G(v,w)$, which is the transmission of $v$ in $G$. The Wiener index is the sum of the distances between all vertex pairs in $G$,
i.e., $W(G)=\frac{1}{2}\sum_{v\in V(G)}Tr_G(v)$. If the transmissions of all vertices in $G$ are equal, then we say $G$ is transmission regular.

Let $S_n$ and $C_n$ be the star and the cycle on $n$ vertices, respectively.

Let $G$ be a connected graph.
For $\emptyset\ne V_1\subset V(G)$, $G[V_1]$ denotes the subgraph of $G$ induced by $V_1$.
For $E_1\subseteq E(G)$, $G-E_1$ denotes the graph obtained from $G$ by deleting all edges of $E_1$. If $E_1=\{uv\}$, then we write $G-uv$ for $G-\{uv\}$. If $E'$ is a subset of edges of the complement of $G$, then $G+E'$ denotes the graph obtained from $G$ by inserting all edges of $E'$. If $E'=\{uv\}$, then we write $G+uv$ for $G+\{uv\}$.


The following lemma is an immediate consequence of the Perron-Frobenius theorem.

\begin{Lemma} \label{2.1} \cite{SuPo90}
Let G be a connected graph with $\{u,v\}\subset V(G)$. If $uv\not\in E(G)$,
then $\rho (G)>\rho (G+uv)$.
\end{Lemma}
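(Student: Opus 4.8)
The plan is to exploit the elementary fact that inserting an edge can never increase any pairwise distance, and to convert this into a strict inequality for $\rho$ by feeding the distance Perron vector of the \emph{larger} graph into the Rayleigh quotient of the smaller one.

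First I would record the monotonicity of distances under edge addition. Since $G$ is a spanning subgraph of $G+uv$, every walk available in $G$ is still available in $G+uv$, so $d_{G+uv}(a,b)\le d_G(a,b)$ for all $a,b\in V(G)$; in particular $G+uv$ is again connected, so $\rho(G+uv)$ is well defined. Because $uv\notin E(G)$ we have $d_G(u,v)\ge 2$, whereas $d_{G+uv}(u,v)=1$, so the single pair $\{u,v\}$ witnesses a strict drop. Entrywise this reads $0\le D(G+uv)\le D(G)$ with at least one strictly smaller off-diagonal entry.

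Next I would compare the two spectral radii through the quadratic form. Let $x=x(G+uv)$ be the distance Perron vector of $G+uv$, which by the Perron--Frobenius theorem is a positive unit vector. Applying Rayleigh's principle to $D(G)$ and expanding the quadratic form as in the Preliminaries gives
\[
\rho(G)\ \ge\ x^{\top}D(G)x\ =\ \sum_{\{a,b\}\subseteq V(G)}2\,d_G(a,b)\,x_ax_b.
\]
Comparing term by term with $x^{\top}D(G+uv)x=\sum_{\{a,b\}\subseteq V(G)}2\,d_{G+uv}(a,b)\,x_ax_b=\rho(G+uv)$, each coefficient $d_G(a,b)$ is at least $d_{G+uv}(a,b)$ while every entry $x_a>0$, so the first sum dominates the second. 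The pair $\{u,v\}$ contributes $2\,d_G(u,v)\,x_ux_v>2\cdot 1\cdot x_ux_v$, making the domination strict, and hence $\rho(G)>\rho(G+uv)$.

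There is essentially no serious obstacle here: the statement is a direct consequence of distance monotonicity together with positivity of the Perron vector. The only point that requires a moment's care is securing strictness, and this is precisely why one uses $x(G+uv)$ rather than $x(G)$ in the Rayleigh quotient for $G$ — this choice makes the bound $\rho(G)\ge x^{\top}D(G)x$ available immediately, while the strict gain is supplied cleanly by the $\{u,v\}$ term with its positive coordinates $x_u,x_v$.
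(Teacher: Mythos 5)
Your argument is correct and is exactly the standard realization of what the paper dismisses as ``an immediate consequence of the Perron--Frobenius theorem'' (the lemma is cited to Ruzieh--Powers without proof): entrywise domination $D(G)\ge D(G+uv)$ with a strict drop at the pair $\{u,v\}$, combined with positivity of the Perron vector of $G+uv$ in the Rayleigh quotient for $D(G)$. No gaps; the choice of $x(G+uv)$ rather than $x(G)$ is indeed the right one to get strictness cheaply.
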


\begin{Lemma} \label{low-bound} \cite{WZ}
Let $G$ be a connected graph of order $n$, and $Tr_{\min}(G)$ and $Tr_{\max}(G)$ be the minimum transmission of $G$ and the maximum transmission, respectively.
Then $Tr_{\min}(G)\leq \rho(G)\leq Tr_{\max}(G)$ and $\rho(G)\geq \frac{2W(G)}{n}$ with either equality   if and only if $G$ is transmission regular.
\end{Lemma}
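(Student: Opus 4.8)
\textbf{Proof proposal for Lemma~\ref{low-bound}.}

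The plan is to derive all three inequalities from the Rayleigh quotient together with the Perron--Frobenius structure of the distance matrix. Since $D(G)$ is a nonnegative irreducible symmetric matrix, $\rho(G)$ equals its Perron root, and the row sums of $D(G)$ are precisely the transmissions: the $u$-th row sum is $\sum_{w\in V(G)}d_G(u,w)=Tr_G(u)$. The upper and lower transmission bounds are then the standard fact that the Perron root of a nonnegative matrix lies between its minimum and maximum row sums. Concretely, I would first prove $\rho(G)\le Tr_{\max}(G)$ by evaluating the eigenequation at a vertex $u^\ast$ where the distance Perron vector $x=x(G)$ attains its maximum entry: from
\[
\rho(G)x_{u^\ast}=\sum_{w\in V(G)}d_G(u^\ast,w)x_w\le x_{u^\ast}\sum_{w\in V(G)}d_G(u^\ast,w)=x_{u^\ast}\,Tr_G(u^\ast)\le x_{u^\ast}\,Tr_{\max}(G),
\]
and dividing by $x_{u^\ast}>0$. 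Symmetrically, evaluating at a vertex where $x$ is minimized and reversing the inequalities gives $\rho(G)\ge Tr_{\min}(G)$.

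Next I would establish $\rho(G)\ge \frac{2W(G)}{n}$ by testing the Rayleigh quotient against the uniform unit vector $\mathbf{1}/\sqrt{n}$, where $\mathbf{1}=(1,\dots,1)^\top$. Since this vector has all entries positive, Rayleigh's principle (as stated in the Preliminaries) gives
\[
\rho(G)\ge \frac{\mathbf{1}^\top D(G)\mathbf{1}}{\mathbf{1}^\top\mathbf{1}}=\frac{1}{n}\sum_{u,v\in V(G)}d_G(u,v)=\frac{2W(G)}{n},
\]
using that $\sum_{u,v}d_G(u,v)=\sum_{v}Tr_G(v)=2W(G)$. This disposes of the Wiener-index bound cleanly.

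The equality analysis is where the real work lies, and I expect it to be the main obstacle. For the Wiener-index bound, equality in Rayleigh's principle holds exactly when $\mathbf{1}$ is itself the distance Perron vector, i.e. when $D(G)\mathbf{1}=\rho(G)\mathbf{1}$; reading this componentwise says $Tr_G(v)=\rho(G)$ for every $v$, which is precisely the definition of transmission regularity. For the transmission bounds, I would argue that equality $\rho(G)=Tr_{\max}(G)$ forces equality throughout the displayed chain above: in particular $\sum_w d_G(u^\ast,w)x_w=x_{u^\ast}\sum_w d_G(u^\ast,w)$ with $d_G(u^\ast,w)>0$ for all $w\ne u^\ast$ (as $G$ is connected) forces $x_w=x_{u^\ast}$ for every $w$, so $x$ is a constant vector; being the positive unit eigenvector, $x=\mathbf{1}/\sqrt{n}$, and then the eigenequation yields $Tr_G(v)=\rho(G)$ for all $v$, giving transmission regularity. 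The same reasoning applied at a minimizing vertex handles $\rho(G)=Tr_{\min}(G)$. The converse direction is immediate: if $G$ is transmission regular with common transmission $t$, then $\mathbf{1}$ is an eigenvector with eigenvalue $t$, and since it is positive it must be the Perron eigenvector, so $\rho(G)=t=Tr_{\min}(G)=Tr_{\max}(G)=\frac{2W(G)}{n}$, giving equality everywhere simultaneously. The one subtlety to handle carefully is the trivial case $|V(G)|=1$ (where $D(G)$ is the zero $1\times 1$ matrix and every quantity is $0$), which I would either exclude or note holds vacuously.
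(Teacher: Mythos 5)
Your proof is correct. Note that the paper itself gives no proof of Lemma~\ref{low-bound}: it is quoted from the reference \cite{WZ}, and your argument --- bounding the Perron root by row sums via the eigenequation at a maximizing/minimizing entry of the Perron vector, the Rayleigh quotient at $\mathbf{1}/\sqrt{n}$ for the Wiener bound, and forcing the Perron vector to be constant in the equality analysis --- is exactly the standard argument used for this result, so there is nothing to flag.
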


\begin{Lemma} \label{permutation} \cite{XZ}
For a connected graph $G$ with $\eta$ being an
automorphism of $G$ and $x = x(G)$, $\eta(v_i) = v_j$ implies that $x_i = x_j$.
\end{Lemma}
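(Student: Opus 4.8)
The plan is to exploit the fact that any automorphism of $G$ preserves the distance matrix, so that the associated permutation matrix commutes with $D(G)$; the conclusion then follows from the simplicity of the distance spectral radius. First I would record the basic observation that an automorphism preserves distances: since $\eta$ is a bijection of $V(G)$ that preserves adjacency (and hence non-adjacency), it carries any shortest $u$--$v$ path to a walk of the same length between $\eta(u)$ and $\eta(v)$, and conversely; therefore $d_G(\eta(u),\eta(v))=d_G(u,v)$ for all $u,v\in V(G)$.

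Let $P$ be the permutation matrix of $\eta$, i.e. the $0$--$1$ matrix with $P_{w,v}=1$ exactly when $w=\eta(v)$. The distance-preserving property just noted says precisely that $P^\top D(G)P = D(G)$, equivalently $D(G)P = P D(G)$ since $P^\top = P^{-1}$. Now let $x=x(G)$ be the distance Perron vector, so that $D(G)x=\rho(G)x$. Applying the commutation relation, $D(G)(Px)=P D(G)x=\rho(G)(Px)$, so $Px$ is again an eigenvector of $D(G)$ for the eigenvalue $\rho(G)$.

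By the Perron--Frobenius theorem, as recalled in the introduction, $\rho(G)$ is a simple eigenvalue, so its eigenspace is one-dimensional and $Px=cx$ for some scalar $c$. Since $P$ is orthogonal we have $\|Px\|=\|x\|$, forcing $|c|=1$; and since $x$ is positive while $Px$ merely permutes the (positive) entries of $x$, the vector $Px$ is positive as well, so $c>0$ and hence $c=1$. Thus $Px=x$, which read entrywise gives $x_{\eta(v)}=x_v$ for every vertex $v$. In particular, if $\eta(v_i)=v_j$, then $x_i=x_{v_i}=x_{\eta(v_i)}=x_{v_j}=x_j$, as claimed.

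Since the statement is a short structural fact, there is no serious analytic difficulty; the only points needing care are the very first step---verifying that $\eta$ preserves shortest-path distances and hence commutes with $D(G)$---and correctly pinning down $c=1$ rather than $c=-1$, which is exactly where the positivity of the Perron vector is used. Alternatively, one could avoid matrices entirely and argue with the $(\rho(G),x)$-eigenequation: define $\tilde{x}_u:=x_{\eta(u)}$, substitute the identity $d_G(\eta(u),\eta(w))=d_G(u,w)$ into the eigenequation to check that $\tilde{x}$ is also a positive unit eigenvector for $\rho(G)$, and then invoke the uniqueness of the distance Perron vector to conclude $\tilde{x}=x$.
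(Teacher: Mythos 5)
Your proof is correct. Note that the paper does not prove this lemma at all---it is quoted from the reference [XZ]---so there is no internal proof to compare against; your argument (automorphism preserves distances, hence the permutation matrix commutes with $D(G)$, hence $Px$ is a $\rho(G)$-eigenvector, and simplicity of $\rho(G)$ together with positivity of the Perron vector forces $Px=x$) is exactly the standard one, and both it and your sketched alternative via uniqueness of the distance Perron vector are complete and sound.
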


\begin{Lemma}\label{trans1}
Let $G$ be a graph of diameter $2$ with  $x=x(G)$. Let $\{u,v\}\subset V(G)$
with  $\emptyset \ne S\subseteq N_G(u)\setminus (\{v\}\cup N_G(v))$.
Let $G'=G-\{uw: w\in S\}+\{vw: w\in S\}$.
If $x_u\geq x_v$ and the diameter of $G'$ is also two,
then $\rho(G')> \rho(G)$.
\end{Lemma}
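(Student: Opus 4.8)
The plan is to use Rayleigh's principle with the distance Perron vector $x=x(G)$ of the \emph{original} graph as a test vector for $G'$, exploiting the fact that in a diameter-$2$ graph adjacency completely determines distance. Since both $G$ and $G'$ have diameter $2$, for any two distinct vertices $a,b$ we have $d(a,b)=1$ when $ab$ is an edge and $d(a,b)=2$ otherwise; equivalently $D(H)=2(J-I)-A(H)$ for $H\in\{G,G'\}$, where $J$, $I$ and $A(H)$ denote the all-ones matrix, the identity, and the adjacency matrix of $H$. Hence
\[
D(G')-D(G)=A(G)-A(G').
\]
The operation changes exactly the edges $uw$ (deleted) and $vw$ (inserted) for $w\in S$, and nothing else. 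Therefore $D(G')-D(G)$ has entry $+1$ at the positions $(u,w),(w,u)$ and entry $-1$ at $(v,w),(w,v)$ for each $w\in S$, and $0$ everywhere else. This is the step where the diameter-$2$ hypothesis on \emph{both} graphs is essential: it guarantees that deleting $uw$ does not lengthen any other distance and inserting $vw$ does not shorten any other distance.

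Next I would apply Rayleigh's principle, which gives $\rho(G')\ge x^\top D(G')x$ since $x$ is a positive unit vector, together with $\rho(G)=x^\top D(G)x$. Using the explicit form of $D(G')-D(G)$ computed above,
\[
x^\top D(G')x-\rho(G)=x^\top\bigl(D(G')-D(G)\bigr)x=2\sum_{w\in S}(x_u-x_v)x_w.
\]
Because $x$ is positive and $x_u\ge x_v$, every summand is nonnegative, so $x^\top D(G')x\ge \rho(G)$.

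To finish I would split into two cases. If $x_u>x_v$, then since $S\ne\emptyset$ and each $x_w>0$ the sum is strictly positive, giving $\rho(G')\ge x^\top D(G')x>\rho(G)$ directly. If $x_u=x_v$, the difference vanishes and only the non-strict bound $\rho(G')\ge\rho(G)$ is available, so I must rule out equality. Suppose $\rho(G')=\rho(G)$; then equality holds in Rayleigh's principle, which forces $x$ to be the distance Perron vector of $G'$ as well, so $D(G')x=\rho(G')x=\rho(G)x=D(G)x$, i.e.\ $\bigl(D(G')-D(G)\bigr)x=0$. But the $u$-th coordinate of $\bigl(D(G')-D(G)\bigr)x$ equals $\sum_{w\in S}x_w>0$, a contradiction. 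Hence $\rho(G')>\rho(G)$ in all cases. I expect the main obstacle to be precisely this equality case $x_u=x_v$: the bare Rayleigh estimate is not strict there, and one needs the Perron-vector uniqueness argument to upgrade it, while the bookkeeping that no distances other than those from $u$ and $v$ to $S$ change must be justified carefully from the diameter-$2$ assumption.
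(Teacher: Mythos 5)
Your proof is correct and follows essentially the same route as the paper: Rayleigh's principle with $x(G)$ as a test vector for $G'$, followed by the eigenequation at $u$ to rule out equality; your identity $D(H)=2(J-I)-A(H)$ is just a cleaner packaging of the paper's verbal bookkeeping of which distances change.
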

\begin{proof}
Note that diameters of $G$ and $G'$ are two.
As we pass from $G$ to $G'$, the distance between a vertex of $S$ and $v$ is decreased by $1$,
the distance between a vertex of $S$ and $u$ is increased by $1$, and the distance between any other vertex pair remains unchanged.
Thus, we have by Rayleigh's principle that
\[\frac{1}{2}(\rho(G')-\rho(G))\geq \frac{1}{2}x^\top(D(G')-D(G))x=\sum_{w\in S}x_w(x_u-x_v)\geq 0,\]
so $\rho(G')\geq \rho(G)$. If $\rho(G')=\rho(G)$, then $\rho(G')=x^\top D(G')x$, so $x$ is also  the distance Perron vector of $G'$.
Then, from the $(\rho(G),x)$-eigenequation and the $(\rho(G'),x)$-eigenequation at $u$, we have
\[
0=\rho(G')x_u-\rho(G)x_u=\sum_{w\in S}x_w>0,
\]
a contradiction. It follows that $\rho(G')>\rho(G)$.
\end{proof}

A vertex of a graph is known as a pendant vertex or a leaf if it is degree is one. An edge incident to such a vertex is a pendant edge; otherwise, it is a non-pendant edge.

\section{Graphs which are the complements of  forests with large and small  distance spectral radius}

If $G$ is a graph and $uv$ is an edge of $G$ that is not a cut edge, then we denote by
 $G(uv)$ the graph obtained from $G$ by identifying the vertices $u$ and $v$ as vertex $u$, and adding a new pendent edge $uv$.

\begin{Lemma}\label{2024-CF-1}
Let $G$ be the complement of the union of $c$ disjoint graphs $G_1,\ldots, G_c$ of orders $n_1,\ldots,n_c$, respectively, where $c\geq2$.
If $G_1$ is a tree with a non-pendant edge $uv$,
then $\rho(G)< \rho\left(\overline{G_1(uv)\cup G_2\cup\cdots\cup G_c}\right)$.
\end{Lemma}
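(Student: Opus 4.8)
The plan is to reduce the statement to the edge-shifting inequality of Lemma \ref{trans1}. The first step is to record that both graphs in question have diameter $2$. Indeed, since $c\geq 2$, the disjoint union $G_1\cup\cdots\cup G_c$ is disconnected, so any two vertices of $G$ lying in a common $G_i$ have a shared neighbor in a different $G_j$, while vertices in different components are adjacent; hence $G$ has diameter $2$. The same argument applies to $\overline{G_1(uv)\cup G_2\cup\cdots\cup G_c}$, because $G_1(uv)$ is again connected (a tree on $n_1$ vertices) and there are still $c\geq 2$ components in the forest being complemented. This places us squarely in the diameter-$2$ regime required by Lemma \ref{trans1}.

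Next I would pin down exactly how the operation $G_1\mapsto G_1(uv)$ changes adjacencies. Write $N_{G_1}(u)=\{v,a_1,\ldots,a_p\}$ and $N_{G_1}(v)=\{u,b_1,\ldots,b_q\}$; since $uv$ is non-pendant we have $p,q\geq 1$. Identifying $u,v$ into $u$ and attaching a new pendant $v$ replaces each edge $vb_j$ by $ub_j$ and leaves every other pair unchanged (the tree structure is essential here: none of the pairs $a_ib_j$, $va_i$, $ub_j$ is an edge of $G_1$ to begin with, and $G_1$ is triangle-free). Passing to complements, this says precisely that
\[
\overline{G_1(uv)\cup G_2\cup\cdots\cup G_c}=G-\{ub_j:1\leq j\leq q\}+\{vb_j:1\leq j\leq q\},
\]
so the target graph is obtained from $G$ by shifting the edges $ub_1,\ldots,ub_q$ off $u$ and onto $v$. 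Setting $S=\{b_1,\ldots,b_q\}$, one checks directly from the tree structure that $ub_j\in E(G)$ and $vb_j\notin E(G)$, whence $\emptyset\neq S\subseteq N_G(u)\setminus(\{v\}\cup N_G(v))$, exactly the hypothesis of Lemma \ref{trans1}.

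The one remaining ingredient is the sign condition $x_u\geq x_v$ for $x=x(G)$, which need not hold as stated. Here I would exploit the symmetry of the construction: the operation is insensitive to which endpoint is retained, so $G_1(uv)\cong G_1(vu)$, and hence $\overline{G_1(uv)\cup\cdots}$ and $\overline{G_1(vu)\cup\cdots}$ share the same distance spectral radius. If $x_u\geq x_v$, I apply Lemma \ref{trans1} with $S=\{b_1,\ldots,b_q\}$ as above. If instead $x_v> x_u$, the symmetric analysis gives $\overline{G_1(vu)\cup\cdots}=G-\{va_i:1\leq i\leq p\}+\{ua_i:1\leq i\leq p\}$, and the tree structure yields $\emptyset\neq\{a_1,\ldots,a_p\}\subseteq N_G(v)\setminus(\{u\}\cup N_G(u))$, so Lemma \ref{trans1} applies with the roles of $u$ and $v$ interchanged. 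In either case Lemma \ref{trans1} delivers $\rho(G)<\rho\big(\overline{G_1(uv)\cup G_2\cup\cdots\cup G_c}\big)$.

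The main obstacle is the combinatorial bookkeeping of the second step: verifying that $G_1\mapsto G_1(uv)$ alters the adjacency of only the $2q$ pairs $\{u,b_j\},\{v,b_j\}$, so that the passage to complements is literally an edge shift. Once this is combined with the observation that the sign requirement of Lemma \ref{trans1} can always be met by choosing the favorable orientation of the (symmetric) contraction, the result follows immediately.
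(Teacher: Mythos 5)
Your proposal is correct and follows exactly the route the paper takes: identify $\overline{G_1(uv)\cup G_2\cup\cdots\cup G_c}$ as an edge-shift of $G$ in the diameter-$2$ setting and invoke Lemma~\ref{trans1}. The paper's own proof is just a three-line sketch ("the diameter of $G'$ is two; suppose WLOG $x_u\geq x_v$; apply Lemma~\ref{trans1}"), and your write-up supplies precisely the details it suppresses, including the correct justification of the WLOG via the symmetry $G_1(uv)\cong G_1(vu)$.
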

\begin{proof}
Let $G'=\overline{G_1(uv)\cup G_2\cup\cdots\cup G_c}$. Obviously, the diameter of $G'$ is two.
Let $x=x(G)$.
Suppose without loss of generality that $x_u\geq x_v$.
By Lemma~\ref{trans1}, $\rho(G')>\rho(G)$.
\end{proof}

A tree of order $n$ with diameter three is a double star $D_{n,a}$, obtainable by adding an edge between the centers of two disjoint stars $S_{a+1}$ and $S_{n-a-1}$ for some $a$ with $2a\le n-2$.

Lin and Drury \cite{ld}  proved among the graphs of order $n$ whose complements are trees different from the star, $D_{n,1}$ and $P_n$ are the graphs achieving maximum  and minimum distance  spectral radius, respectively. We establish results of the complements of forests.

\begin{Theorem}\label{forest-min-1}
Let $G$ be the complement of a forest on $n$ vertices with $c$ components, where $c\geq 2$.
If  $G\ncong \overline{S_{n-c+1}\cup (c-1)K_1}$, then
\[
\rho(G)\leq \rho(\overline{D_{n-c+1,1}\cup (c-1)K_1})<\rho(\overline{S_{n-c+1}\cup (c-1)K_1})
 \]
 with equality in the first inequality if and only if $\overline{G}\cong D_{n-c+1,1}\cup (c-1)K_1$.
\end{Theorem}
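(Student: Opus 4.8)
The plan is to phrase everything in terms of the forest $F=\overline G$. Since $F$ has $c\ge 2$ components it is disconnected, so $G$ has diameter at most $2$; this is precisely the regime in which Lemmas~\ref{trans1} and \ref{2024-CF-1} apply, and I will use them as the engine. Throughout write $k=n-c+1$ for the order of the (unique) large component, and note that an admissible $F$ has exactly $n-c$ edges. I will treat the trivial small cases $k\le 3$ (where no double star $D_{k,1}$ exists) separately, and assume $k\ge 4$ below.

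The strict right-hand inequality is immediate from Lemma~\ref{2024-CF-1}. The double star $D_{k,1}$ has a single non-pendant edge, the edge $uv$ joining its two centers, and a direct check shows that the identification operation yields $D_{k,1}(uv)=S_{k}$. Taking $G_1=D_{k,1}$ and letting the other components be the $c-1$ copies of $K_1$, Lemma~\ref{2024-CF-1} gives $\rho(\overline{D_{k,1}\cup(c-1)K_1})<\rho(\overline{S_{k}\cup(c-1)K_1})$ at once.

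For the left inequality with its equality case, let $F$ maximize $\rho(\overline{\,\cdot\,})$ over all $c$-component forests on $n$ vertices other than $S_{k}\cup(c-1)K_1$; I must show $F\cong D_{k,1}\cup(c-1)K_1$. The argument is a dichotomy. If some component of $F$ has a non-pendant edge $uv$, then Lemma~\ref{2024-CF-1} produces a strictly larger value, which by maximality must be the excluded graph $S_{k}\cup(c-1)K_1$; tracing the operation backward forces the operated component to be a double star and all other components to be trivial, so $F\cong D_{k,a}\cup(c-1)K_1$ for some $a\ge 1$. A short comparison of the eigenequations at the two centers (using Lemma~\ref{permutation} to equate the weights within each leaf set) shows the more heavily loaded center carries the larger distance-Perron entry; shifting leaves onto that center via Lemma~\ref{trans1} then strictly increases $\rho$, so the optimum over this family is the most unbalanced one, $a=1$. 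In the complementary case $F$ is a disjoint union of stars; if it had at least three non-trivial stars I would merge two of them by Lemma~\ref{trans1} (shifting all leaves of one onto the center of the other), gaining a strictly larger value that is still a union of stars with at least two non-trivial parts, hence still admissible, a contradiction. Thus $F$ has exactly two non-trivial stars, and one further round of leaf concentration shows the largest such value is attained at $S_{k-1}\cup K_2\cup(c-2)K_1$.

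It then remains to compare the two surviving candidates and prove $\rho(\overline{D_{k,1}\cup(c-1)K_1})>\rho(\overline{S_{k-1}\cup K_2\cup(c-2)K_1})$, and this I expect to be the main obstacle. The natural transformation between these forests (join the $K_2$ to the big center, then delete one pendant edge) amounts to a rewiring at a single vertex, and a first-order Rayleigh-quotient estimate with the old Perron vector is inconclusive here — indeed it has the wrong sign, because the two critical vertices have equal transmission — so I would not try to realize this comparison by a single application of Lemma~\ref{trans1}. Instead I would compute $\rho$ exactly for both graphs through their equitable partitions into automorphism orbits (at most five classes for the double-star complement, at most four for the two-star complement), reduce to comparing the largest roots of two explicit quotient characteristic polynomials in $k$ and $c$, and verify the strict inequality for all admissible parameters. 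Combining the two cases identifies the maximizer as $D_{k,1}\cup(c-1)K_1$, which together with the right-hand inequality yields the full statement, the left-hand equality holding exactly when $\overline G\cong D_{k,1}\cup(c-1)K_1$.
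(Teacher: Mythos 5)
Your proposal is correct and follows essentially the same route as the paper: reduce the maximizer (among admissible forests) to the two candidates $D_{n-c+1,1}\cup(c-1)K_1$ and $S_{n-c}\cup K_2\cup(c-2)K_1$ via Lemmas~\ref{2024-CF-1} and~\ref{trans1}, then decide between them through the characteristic polynomials of the automorphism-orbit quotients. The one step you defer---the final polynomial comparison---is exactly what the paper executes, by deriving $P_{H'}(t)=P_{H}(t)(t+2)-\left(2t^2+(2n-2c+5)t+3n-4c+3\right)$ and concluding $P_{H'}(\rho(H))<0$ from $\rho(H)>n$ and $c\le n-3$, so your instinct that a single Rayleigh/edge-shift argument is inconclusive there matches the paper's recourse to this computation.
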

\begin{proof}
By Lemma~\ref{2024-CF-1},  $\rho(\overline{S_{n-c+1}\cup (c-1)K_1})>\rho(\overline{D_{n-c+1,1}\cup (c-1)K_1})$.

Let $G$ be the complement of a forest on $n$ vertices with $c$ components maximizing the distance spectral radius. Let $x=x(G)$.
Let $T_1,\ldots, T_c$ be the components of $\overline{G}$ with orders $n_1,\ldots n_c$, respectively,  where $n_1\geq \cdots\geq n_c\geq1$ and $n_1+\cdots+n_c=n$.

\noindent
{\bf Case~1.} $n_2\geq 2$.

\begin{Claim} \label{c1.1}
Each  $T_i$ for $1\leq i\leq c$ is a star.
\end{Claim}

\begin{proof}
Suppose that some $T_i$ is not a star.   By relabelling the components of $\overline{G}$, we may assume $i=1$.
Let $v_0\ldots v_d$ be a diametral path in $T_1$, where $d\geq 3$.
Let  $G'=\overline{T_1(v_2v_1)\cup T_2\cup \cdots\cup T_c}$\,.
Then, by Lemma~\ref{2024-CF-1}, $\rho(G')>\rho(G)$, a contradiction.
\end{proof}

\begin{Claim} \label{c1.2}
$n_3=\cdots=n_c=1$ and $n_2=2$.
\end{Claim}

\begin{proof}
Suppose that $n_3\geq 2$. By Claim~\ref{c1.1}, $T_{1}\cong S_{n_1}$ and $T_{3}\cong S_{n_3}$.
Let $u_0$ and $v_0$ be the centers of $S_{n_1}$ and $S_{n_3}$, respectively. Let $u_0u$ and $v_0v$ be fixed edges in $T_1$ and $T_3$, respectively.
By Lemma~\ref{permutation} and the $(\rho(G), x)$-eigenequation of $G$ at $u,v,u_0$ and $v_0$,
\begin{align}
\rho(G)x_u&=\sum_{w\in V(G)}x_w-x_u+x_{u_0}, \label{eq1}\\
\rho(G)x_{u_0}&=\sum_{w\in V(G)}x_w-x_{u_0}+(n_1-1)x_u, \label{eq2}\\
\rho(G)x_v&=\sum_{w\in V(G)}x_w-x_v+x_{v_0},\label{eq3}\\
\rho(G)x_{v_0}&=\sum_{w\in V(G)}x_w-x_{v_0}+(n_3-1)x_v. \label{eq4}
\end{align}
From \eqref{eq1} and \eqref{eq2}, and \eqref{eq3} and \eqref{eq4}, we have
\begin{align}
x_u&=\frac{\rho(G)+2}{\rho(G)+n_1}x_{u_0}, \label{eq5}\\
x_v&=\frac{\rho(G)+2}{\rho(G)+n_3}x_{v_0}.  \label{eq6}
\end{align}
From \eqref{eq2}, \eqref{eq4}, \eqref{eq5} and \eqref{eq6},
we have
\begin{align*}
\frac{x_{u_0}}{x_{v_0}}
&=1+\frac{(n_1-n_3)(\rho^2(G)+3\rho(G)+2)}{(\rho^2(G)+2\rho(G)-n_1+2)(\rho(G)+n_3)}
\geq1,
\end{align*}
so $x_{u_0}\geq x_{v_0}$.
Let $G'=G-u_0v+v_0v$. Note that both diameters of $G'$ and $G$ are two.
By Lemma~\ref{trans1}, we have  $\rho(G')>\rho(G)$, a contradiction.
Thus $n_3=1$.

By similar argument as above, we can get $n_2=2$.
\end{proof}

Combining Claims~\ref{c1.1} and~\ref{c1.2},  $G\cong \overline{S_{n-c}\cup K_2\cup (c-2)K_1}$.

\noindent{\bf Case~2.} $n_2=1$.

Note that $c\leq n-3$ as $G\ncong \overline{S_{n-c+1}\cup (c-1)K_1}$.

\begin{Claim}\label{c21}
$T_1$ is a double star.
\end{Claim}

\begin{proof}
Suppose that $T_1$ is not a double star. Then the diameter of $T_1$ is at least $4$.
Let $v_0v_1\ldots v_d$ be a diametral path in $T_1$, where $d\geq 4$.
Let $G'=\overline{T_1(v_2v_1)\cup T_2\cup \cdots\cup T_c}$.
Then, by Lemma~\ref{2024-CF-1}, we have $\rho(G')>\rho(G)$, a contradiction.
\end{proof}


By Claim~\ref{c21}, $T_1$ is a double star, say $T_1\cong D_{n-c+1,a}$, where
$1\leq a\leq \frac{n-c+1}{2}$.
Suppose that $a\geq 2$.
Let $v_1v_2v_3v_4$ be a diametral path of $T_1$ and let $x=x(G)$.
Let
\[
T'_1=T_1-\{wv_2:w\in N_{T_1}(v_2)\setminus\{v_1\}\}+\{wv_3:w\in N_{T_1}(v_2)\setminus\{v_1\}\}
 \]
 if $x_{v_2}\leq x_{v_3}$
and
\[
T'_1=T_1-\{wv_3:w\in N_{T_1}(v_3)\setminus\{v_4\}\}+\{wv_2:w\in N_{T_1}(v_3)\setminus\{v_4\}\}
 \]
if $x_{v_2}> x_{v_3}$.
Obviously, $T'_1\cong D_{n-c+1,1}$.
Let $G'=\overline{T'_1\cup (c-1)K_1}$.
By Lemma~\ref{trans1}, $\rho(G')>\rho(G)$, a contradiction. So $a=1$.
That is,  $G\cong \overline{D_{n-c+1,1}\cup (c-1)K_1}$.

Combining Cases 1 and 2, $G\cong H:=\overline{S_{n-c}\cup K_2\cup (c-2)K_1}$ or $H':=\overline{D_{n-c+1,1}\cup (c-1)K_1}$.
We need to compare $\rho(H)$ and $\rho(H')$.
Let $y=x(H)$.
Let $y_1$ and $y_2$ be the entries of $y$ corresponding to a leaf and the center of $S_{n-c}$, respectively,
$y_3$ be the entry of $y$ corresponding to a vertex of $K_2$, and $y_4$ be the entry of $y$ corresponding to a vertex of $(c-2)K_1$.
By Lemma~\ref{permutation} and the $(\rho(H), y)$-eigenequations, we have
\begin{align*}
\rho(H)y_1&=\sigma_y-y_1+y_2,\\
\rho(H)y_2&=\sigma_y-y_2+(n-c-1)y_1,\\
\rho(H)y_3&=\sigma_y-y_3+y_3,\\
\rho(H)y_4&=\sigma_y-y_4,
\end{align*}
where $\sigma_y=(n-c-1)y_1+y_2+2y_3+(c-2)y_4$.
Viewing the above four equations as a homogeneous linear system in the four variables $y_1, y_2,y_3$ and $y_4$, it has a nontrivial solution, so the determinant of the matrix of coefficients of
this homogeneous linear system is zero. A direct calculation show that the determinant is equal
to $P_H(t)$, where \begin{align*}
P_H(t)
=t^4-t^3(n-3)-t^2(5n-3c-4)+t(cn-4n-c^2+2c)+ 2n-2c-4.
\end{align*}
Then $\rho(H)$ is the largest root of $P_H(t)=0$.
By similar argument,
$\rho(H')$ is the largest root of $P_{H'}(t)=0$,
where
\begin{align}
P_{H'}(t)&=t^5-t^4(n-5)-t^3(7n-3c-10)+ t^2(cn-14n-c^2+8c+6)\notag\\
&\quad +t(2cn-8n- 2c^2+ 4c-9)+n-11 \notag\\
&=P_{H}(t)(t+2)-\left(2t^2+(2n-2c+5)t+3n-4c+3\right). \label{Eq2024-1227}
\end{align}
As $c\leq n-3$ and $\rho(H)>n$, we have
by \eqref{Eq2024-1227} that
\begin{align*}
P_{H'}(\rho(H))=&P_{H}(\rho(H))(\rho(H)+2)-\left(2\rho^2(H)+(2n-2c+5)\rho(H)+3n-4c+3\right)\\
=&-\left(2\rho^2(H)+(2n-2c+5)\rho(H)+3n-4c+3\right)\\
<&-\left(2n^2+(2n-2c+5)n+3n-4c+3\right)<0,
\end{align*}
which implies $\rho(H')>\rho(H)$.
Thus $G\cong \overline{D_{n-c+1,1}\cup (c-1)K_1}$, as desired.
\end{proof}

\begin{Theorem}\label{forest-min}
If $G$ is the complement of a forest on $n$ vertices with $c$ components, where $c\geq 2$,
then $\rho(G)\geq \rho(P_{n,c})$ with equality if and only if $G\cong P_{n,c}$.
\end{Theorem}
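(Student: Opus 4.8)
The plan is to take $G$ realizing the minimum of $\rho$ over complements of forests with $c$ components, write $F=\overline G$ with tree components $T_1,\dots,T_c$ of orders $n_1\ge\cdots\ge n_c$, and force $F$ to be a disjoint union of paths whose orders differ pairwise by at most $1$, which is exactly $\overline{P_{n,c}}$. Throughout I would use that $\overline F$ is connected of diameter $2$ for every forest with $c\ge2$ components: if $uv\in E(F)$ then, since $F$ is acyclic, some vertex is adjacent in $F$ to neither $u$ nor $v$ once $n$ is not too small, giving a common $\overline F$-neighbour. Thus Lemma~\ref{trans1} applies to $G$ and to every graph produced along the way, the finitely many small exceptions being checked by hand.

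The key simplification is an eigenequation available because $\overline F$ has diameter $2$: for distinct $v,w$ one has $d_{\overline F}(v,w)=2$ iff $vw\in E(F)$ and $d_{\overline F}(v,w)=1$ otherwise, so with $x=x(G)$ and $\sigma=\sum_u x_u$ the $(\rho(G),x)$-eigenequation reads
\[
(\rho(G)+1)\,x_v=\sigma+\sum_{w\in N_F(v)}x_w .
\]
Hence $x_v>x_{v'}$ precisely when $\sum_{w\in N_F(v)}x_w>\sum_{w\in N_F(v')}x_w$, and the Perron weights are entirely controlled by the $F$-neighbour structure. To compare two candidates I would apply Lemma~\ref{trans1} to the \emph{more structured} one: if $G=G_1-uw+vw$ for some $u,v,w$ and $x(G_1)_u\ge x(G_1)_v$, then $\rho(G)>\rho(G_1)$; since $G_1$ will be a complement of a forest closer to $P_{n,c}$, its weights are the ones I can control through the displayed eigenequation.

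First I would prove that every $T_i$ is a path. If some $T_i$ has a vertex of degree at least $3$, I take a diametral path of $T_i$ and relocate an offending leaf to its far end, producing a forest $G_1$ with $c$ components whose longest path is longer; writing $G=G_1-uw+vw$, the reverse move reattaches the leaf in $G_1$ at a more central vertex $u$ of the lengthened path, which by the eigenequation (interior path vertices accumulate more neighbour-weight than leaves) satisfies $x(G_1)_u\ge x(G_1)_v$. Hence $\rho(G)>\rho(G_1)$, contradicting minimality, and repeating this leaves each component a path.

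The delicate step, and the one I expect to be the main obstacle, is balancing the path orders. Single edge-relocations do \emph{not} help here: merging an endpoint onto a shorter path turns that endpoint into an interior vertex of the enlarged path, and by the eigenequation such interior vertices carry \emph{larger} Perron weight than the endpoints they must be compared with, so the hypothesis of Lemma~\ref{trans1} fails in exactly the wrong direction. Instead I would solve the recurrence from the eigenequation on each path component, $x_{k+1}-(\rho(G)+1)x_k+x_{k-1}=-\sigma$ with $x_0=x_{p+1}=0$, giving the symmetric profile $x_k=x^*\bigl(1-\cosh((k-\tfrac{p+1}{2})\theta)/\cosh(\tfrac{p+1}{2}\theta)\bigr)$ where $2\cosh\theta=\rho(G)+1$ and $x^*=\sigma/(\rho(G)-1)$, and then impose global consistency $\sigma=\sum_{i,k}x^{(i)}_k$. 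This yields a single secular equation
\[
\frac{1}{\sinh(\theta/2)}\sum_{i=1}^{c}\frac{\sinh(p_i\theta/2)}{\cosh((p_i+1)\theta/2)}=n+1-\rho(G),
\]
whose largest root $\rho(G)$ I would then minimise over all partitions $p_1+\cdots+p_c=n$. Showing that the minimum is attained exactly at the balanced partition---by proving the appropriate Schur concavity or convexity of $p\mapsto \sinh(p\theta/2)/\cosh((p+1)\theta/2)$, so that a majorization step pushes the root down as the parts are equalised---is the technical heart. Once this is in place the balanced linear forest is $\overline{P_{n,c}}$, and the strictness of every reduction yields uniqueness.
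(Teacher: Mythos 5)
Your overall strategy (take a minimizer, show every component of the complement is a path, then balance the path orders) matches the paper's, and your reduction of the eigenequation to $(\rho(G)+1)x_v=\sigma+\sum_{w\in N_F(v)}x_w$ is correct and is implicitly what the paper uses. But both of your two main steps have genuine gaps, and in each case the gap sits exactly where the real work is. For the claim that every $T_i$ is a path, you need $x(G_1)_u\ge x(G_1)_v$ before Lemma~\ref{trans1} applies, and you justify this only by the parenthetical remark that ``interior path vertices accumulate more neighbour-weight than leaves.'' That is not a usable general principle: the comparison is between two specific vertices whose $F$-neighbourhoods are not nested, and which of the two carries the larger Perron weight is precisely what has to be proved. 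The paper's proof of the corresponding Claim~\ref{ca1} chooses the branch vertex and the two pendant paths at it very carefully and then runs an induction along those paths (establishing $x_{u_{p-i}}>x_{v_{q-1-i}}$ for all $i$ via repeated two-edge swaps, each needing an eigenequation argument to exclude equality) before Lemma~\ref{trans1} can be invoked. Without that induction your first step does not close.

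The balancing step is where you genuinely diverge from the paper, and your secular equation is in fact correct (the telescoped sum $\sum_{k=1}^{p}\cosh((k-\tfrac{p+1}{2})\theta)=\sinh(p\theta/2)/\sinh(\theta/2)$ checks out, as does $x^*=\sigma/(\rho-1)$). However, you explicitly leave unproved the Schur-convexity/majorization property of $p\mapsto\sinh(p\theta/2)/\cosh((p+1)\theta/2)$ that would force the balanced partition, and you would additionally have to control the dependence of $\theta$ on $\rho$ (i.e., fix $\rho$, compare the secular functions, and verify monotonicity in $\rho$ to transfer the comparison to the largest roots, plus argue that $\rho(G)$ is indeed the largest root of that equation). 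Since you yourself call this ``the technical heart,'' the proof is not complete. I would also point out that your claim that single edge-relocations cannot balance the paths is wrong: the paper's Claim~\ref{ca2} does exactly this, moving the endpoint $u_p$ of the longest path onto the end of the shortest one and then proving $x_{u_{p-1-i}}<x_{v_{q-i}}$ by the same kind of inductive swap argument as in Claim~\ref{ca1}; the hypothesis of Lemma~\ref{trans1} is recovered at the last step of that induction rather than at the first. If you want to salvage your analytic route you must actually prove the majorization statement; otherwise the combinatorial induction is the road the paper shows to be passable.
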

\begin{proof}
Let $G$ be the complement of a forest on $n$ vertices with $c$ components that minimizes the distance spectral radius.
Let $T_1,\ldots,T_c$ be all components of $\overline{G}$, and
$n_i$ be the order of $T_i$, where $i=1,\ldots,c$ and $n_1\geq \cdots\geq n_c\geq 1$.

\begin{Claim} \label{ca1}
Each $T_i$ for $i=1,\ldots,c$ is a path.
\end{Claim}

\begin{proof}
Suppose that some $T_i$, say $T_1$ is not a path.
Let $v$ be a pendant vertex in $T_1$ and $u$ be the vertex of degree at least $3$ such that $d_{T_1}(v,u)$ is as large as possible.
Then there are at least two pendant paths, say $P$ and $Q$ of the lengths of $p$ and $q$ respectively at $u$ in $T_1$, where $p\geq q\geq 1$.
Let $P=u_0u_1\ldots u_p$ and $Q=v_0v_1\ldots v_q$, where $u_0=v_0=u$.
Let $T'_1=T_1-v_qv_{q-1}+v_qu_p$ and $G'=\overline{T'_1\cup T_2\cup \cdots \cup T_c}$. Obviously, both diameters of $G'$ and $G$ are two.
Write $u_{p+1}=v_q$ in $G'$ and $x=x(G')$.

First, we prove that $x_{u_{p-i}}>x_{v_{q-1-i}}$ for all $i=0,1,\ldots, q-1$ by induction on $i$.

It is true for $i=0$, as otherwise, we have  $x_{u_{p}}\leq x_{v_{q-1}}$, so we have
by  Lemma~\ref{trans1} that $\rho(G)>\rho(G')$ for $G'=G-u_pu_{p+1}+v_{q-1}u_{p+1}$, contradicting the choice of $G$.

If $q=1$, then $i=0$.
Assume that $q\geq 2$ and $x_{u_{p-i}}>x_{v_{q-1-i}}$ for some $i=0,1,\ldots, q-2$.
Suppose that $x_{u_{p-(i+1)}}\leq x_{v_{q-1-(i+1)}}$.
Note that
\[
T_1\cong T'_1-u_{p-(i+1)}u_{p-i}- v_{q-1-(i+1)}v_{q-1-i}+u_{p-i}v_{q-1-(i+1)}+ u_{p-(i+1)}v_{q-1-i},
\]
and
\[
G\cong G'-u_{p-i}v_{q-1-(i+1)}-u_{p-(i+1)}v_{q-1-i}+u_{p-(i+1)}u_{p-i}+v_{q-1-(i+1)}v_{q-1-i}.
\]
Then
we have
\begin{align*}
\frac{1}{2}(\rho(G)-\rho(G'))&\geq \frac{1}{2}x^\top(D(G)-D(G'))x\\
&=\left(x_{v_{q-1-(i+1)}}-x_{u_{p-(i+1)}}\right)\left(x_{u_{p-i}}-x_{v_{q-1-i}}\right)\geq 0,
\end{align*}
i.e., $\rho(G)\geq \rho(G')$. Suppose that $\rho(G')= \rho(G)$. Since $\rho(G)=x^\top D(G)x$, $x$ is the distance Perron vector of $G$.
From the $(\rho(G),x)$-eigenequation and the $(\rho(G'),x)$-eigenequation at $u_{p-(i+1)}$,
we have
\begin{align*}
0=(\rho(G)-\rho(G'))x_{u_{p-(i+1)}}=x_{v_{q-1-i}}-x_{u_{p-i}}<0,
\end{align*}
a contradiction.
Thus $\rho(G)>\rho(G')$, a  contradiction again. So  $x_{u_{p-(i+1)}}>x_{v_{q-1-(i+1)}}$.
Therefore we indeed have $x_{u_{p-i}}>x_{v_{q-1-i}}$ for all $i=0,1,\ldots, q-1$.

Now, for $i=q-1$, we have $x_{u_{p-q+1}}>x_{v_0}=x_u$.
Note that
\[
T_1\cong T'_1-\{uw:w\in N_{T_1}(u)\setminus\{v_1,u_1\}\}+\{u_{p-q+1}w:w\in N_{T_1}(u)\setminus\{v_1,u_1\}\}
\]
and
\[
G\cong G'-\{u_{p-q+1}w:w\in N_{T_1}(u)\setminus\{v_1,u_1\}\}+\{uw:w\in N_{T_1}(u)\setminus\{v_1,u_1\}\}.
\]
By Lemma~\ref{trans1}, we have $\rho(G)>\rho(G')$, contradicting the choice of $G$.
\end{proof}

\begin{Claim}\label{ca2}
$n_1-n_c\leq 1$.
\end{Claim}

\begin{proof}
Suppose that $n_1-n_c\geq 2$.

Write $p=n_1$ and $q=n_c$.
By Claim~\ref{ca1}, we can assume that $T_1=u_1u_2\ldots u_p$ and $T_c=v_1v_2\ldots v_q$.
Let $G'=G-u_pv_q+u_pu_{p-1}$. Obviously, $\overline{G'}[V(T_1\cup T_c)]\cong P_{p-1}\cup P_{q+1}$ and the diameter of $G'$ is two.
Write $v_{q+1}=u_{p}$ in $G'$. Let $x=x(G')$.

If $x_{u_{p-1}}\geq x_{v_q}$, then as $G=G'-v_{q+1}u_{p-1}+v_{q+1}v_{q}$, we have
by Lemma~\ref{trans1} that $\rho(G)>\rho(G')$, a contradiction.
So  $x_{u_{p-1}}< x_{v_q}$.

First, we prove that $x_{u_{p-1-i}}<x_{u_{q-i}}$ for all $i=0,1,\ldots, q-1$ by induction on $i$.
If $q=1$, then $i=0$.
Assume that $q\geq 2$ and $x_{u_{p-1-i}}<x_{v_{q-i}}$ for all $i=0,1,\ldots, q-2$.
Suppose that  $x_{u_{p-1-(i+1)}}\geq x_{v_{q-(i+1)}}$.
Let
\[
G''=G'-u_{p-1-(i+1)}v_{q-i}-u_{p-1-i}v_{q-(i+1)}+u_{p-1-(i+1)}u_{p-1-i}+v_{q-(i+1)}v_{q-i}.
\]
Obviously, $\overline{G''}[V(T_1\cup T_c)]=P_p\cup P_q$ and $G''\cong G$.
Then
\begin{align*}
\frac{1}{2}(\rho(G)-\rho(G'))=\frac{1}{2}(\rho(G'')-\rho(G'))&\geq \frac{1}{2}x^\top(D(G'')-D(G'))x\\
&=\left(x_{u_{p-1-(i+1)}}-x_{v_{q-(i+1)}}\right)\left(x_{v_{p-i}}- x_{u_{p-1-i}}\right)\geq 0,
\end{align*}
i.e., $\rho(G)\geq \rho(G')$.
Suppose that $\rho(G)=\rho(G')$.
 Since $\rho(G)=x^\top D(G)x$, $x$ is the distance Perron vector of $G$.
From the $(\rho(G'),x)$-eigenequation and the $(\rho(G),x)$-eigenequation at $u_{p-1-(i+1)}$,
\begin{align*}
0=(\rho(G')-\rho(G))x_{u_{p-1-(i+1)}}&=x_{v_{q-i}}-x_{u_{p-1-i}}>0,
\end{align*}
a contradiction. So $\rho(G)>\rho(G')$, a contradiction again.
Thus $x_{u_{p-1-(i+1)}}< x_{v_{q-(i+1)}}$.
Therefore we indeed have  $x_{u_{p-1-i}}<x_{v_{q-i}}$ for all $i=0,1,\ldots, q-1$.

Now, for $i=q-1$, we have $x_{u_{p-q}}<x_{v_{1}}$.
Let $G''=G'-u_{p-q-1}v_1 +u_{p-q-1}u_{p-q}$.
Evidently, $\overline{G''}[V(T_1\cup T_c)]=P_p\cup P_q$, and $G\cong G''$.
By Lemma~\ref{trans1}, we have $\rho(G)=\rho(G'')>\rho(G')$, also a contradiction.
\end{proof}

Combining Claims~\ref{ca1} and~\ref{ca2}, we have $G\cong P_{n,c}$.
\end{proof}

\section{Proof of Theorem \ref{max=1}}

\begin{Lemma}\label{p}\cite{SI, WZ}
Let $G$ be a tree on $n$ vertices, where $n\geq 6$.
If $G\notin\{P_n,A_n\}$, then $\rho(G)\leq \rho(B_n)< \rho(A_n)< \rho(P_n)$ with equality if and only if $G\cong B_n$.
\end{Lemma}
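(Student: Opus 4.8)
The plan is to prove the statement by a maximality argument driven by grafting operations on trees, after first pinning down the three named trees. \textbf{Establishing the chain.} The trees $P_n,A_n,B_n$ are all starlike: $A_n$ is the spider with legs $1,1,n-3$ and $B_n$ the spider with legs $1,2,n-4$. The basic tool I would record is the \emph{pendant-path comparison}: if a fixed rooted graph carries two pendant paths of lengths $p\ge q\ge1$ attached at the root, then unbalancing them (replacing the lengths by $p+1,q-1$) strictly increases $\rho$, reflecting the fact that more path-like trees have larger distance spectral radius. Applying this to the two legs of $B_n$ of lengths $2$ and $n-4$ converts $B_n$ into $A_n$, so $\rho(B_n)<\rho(A_n)$; and $\rho(A_n)<\rho(P_n)$ is immediate from the stated result of \cite{SI} that $P_n$ is the unique tree of maximum distance spectral radius. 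This gives the strict chain $\rho(B_n)<\rho(A_n)<\rho(P_n)$.

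\textbf{The main inequality.} Let $T$ be a tree on $n$ vertices with $T\notin\{P_n,A_n\}$ that maximizes $\rho$ over this finite nonempty class; it then suffices to prove $T\cong B_n$. The argument is a chain of structural reductions: whenever $T$ violates the next required property, I exhibit a competitor $T'$ in the same class with $\rho(T')>\rho(T)$, contradicting maximality. First, combining the unbalancing operation with a \emph{branch-shifting} operation (which detaches a pendant subtree at an internal vertex and reattaches it at the far end of a longest pendant path, making $T$ more path-like) I reduce to the case that $T$ has a unique vertex of degree at least $3$, of degree exactly $3$; thus $T$ is a spider $S(l_1,l_2,l_3)$ with $l_1\ge l_2\ge l_3\ge1$ and $l_1+l_2+l_3=n-1$. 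Next, if $l_3\ge2$, unbalancing the longest and shortest legs replaces $(l_1,l_3)$ by $(l_1+1,l_3-1)$ and yields a spider again in the class (its middle leg still has length $l_2\ge2$, so it is neither $P_n$ nor $A_n$); hence $l_3=1$. Finally, if $l_2\ge3$, unbalancing the legs of lengths $l_1$ and $l_2$ keeps a spider with legs $(l_1+1,l_2-1,1)$, still in the class; hence $l_2\le2$. Since $l_2=1$ gives the excluded tree $A_n=S(n-3,1,1)$, we conclude $l_2=2$ and $T\cong S(n-4,2,1)=B_n$.

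\textbf{Key tool and main obstacle.} Each grafting inequality is proved by the Rayleigh-quotient technique already used for Lemma~\ref{trans1}: with $x=x(T)$ one writes $\tfrac12(\rho(T')-\rho(T))\ge\tfrac12 x^\top(D(T')-D(T))x$, controls the finitely many distance changes caused by the move, and rules out equality through the two eigenequations at a suitable vertex. The step I expect to be hardest is proving these comparisons when the move alters distances larger than $1$ and of both signs, unlike the diameter-two situation of Lemma~\ref{trans1}: one must compare entries of the distance Perron vector along a pendant path and show they increase toward the far end, which calls for an inductive eigenvector argument of the kind carried out in the proof of Theorem~\ref{forest-min}. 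The second delicate point is bookkeeping in the reduction to a single degree-$3$ vertex, where each operation must be verified to leave a tree still outside $\{P_n,A_n\}$, so the few configurations lying one operation away from $A_n$ or $P_n$ must be handled as explicit boundary cases rather than by a blanket transformation.
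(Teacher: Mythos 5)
The paper does not prove this lemma at all: it is imported verbatim from the cited references \cite{SI, WZ}, so there is no in-paper argument to compare yours against. Judged on its own, your skeleton is the standard one for such ordering results and is pointed in the right direction: for the distance spectral radius the path is the maximizer, so the pendant-path unbalancing $(p,q)\mapsto(p+1,q-1)$ for $p\ge q\ge 1$ does increase $\rho$, your derivation of $\rho(B_n)<\rho(A_n)<\rho(P_n)$ from it is correct (note $n\ge 6$ is exactly what makes $n-4\ge 2$), and the final spider analysis $l_3=1$, $l_2=2$ correctly isolates $B_n$.

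There are, however, two genuine gaps. First, the unbalancing lemma is the technical core and you only assert it; its proof for the distance matrix cannot be a one-line Rayleigh computation because the move changes distances by more than $1$ and in both signs, so one must first establish the monotonicity of the Perron entries along the two legs by the kind of induction carried out in Claim~\ref{ca1} of Theorem~\ref{forest-min} --- and there that induction is materially eased by the diameter-two setting, which is unavailable here. Second, and more seriously, the reduction of an arbitrary non-spider tree to a spider with a single degree-$3$ vertex is not actually performed, and the boundary cases you defer are not marginal: the four-leg broom $S(1,1,1,n-4)$ and the double brooms/double stars are exactly one grafting move away from $A_n$ (for $n=6$, \emph{every} natural unbalancing of $S(1,1,1,2)$ lands on $A_6$), and these trees are precisely the competitors that must be beaten for $B_n$ to be third in the ordering. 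Until you either exhibit alternative $\rho$-increasing moves for those configurations that stay inside the class, or compare them with $B_n$ directly (e.g.\ via characteristic polynomials as the paper does for $H$ versus $H'$ in Theorem~\ref{forest-min-1}), the maximality argument does not close.
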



\begin{proof}[Proof of Theorem \ref{max=1}]
Let $G$ be the graph different from $P_{m+1}, A_{m+1}$ in $\mathbb{G}(m)$ maximizing the distance spectral radius. Let $n$ be the order of $G$.
As $D(P_m)$ is a submatrix of each of $D(A_{m+1}), D(B_{m+1})$ and $D(P_{m+1})\}$,
we have by Lemma~\ref{p} that
\[
\rho(P_m)<\min\{\rho(A_{m+1}),\rho(B_{m+1}),\rho(P_{m+1})\}=\rho(B_{m+1}).
\]
If $n\leq m$, then we have by Lemma~\ref{p} that $\rho(G)\leq \rho(P_m)<\rho(B_{m+1})$, a contradiction.
Thus $n=m+1$. By Lemma~\ref{p}, $\rho(G)\leq \rho(B_{m+1})$ with equality if and only if $G\cong B_{m+1}$.
\end{proof}

\section{Proof of Theorems \ref{size2023-result1=1}}


\begin{proof}[Proof of Theorems \ref{size2023-result1=1}]
Let $n_G$ be the order of $G$.

\begin{Claim}\label{order} $n_G=n$.
\end{Claim}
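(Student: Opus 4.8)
The plan is to argue by contradiction: assuming the extremal graph $G$ has order $n_G\ge n+1$, I will exhibit a graph on $n$ vertices with $m$ edges whose distance spectral radius is strictly smaller. First I pin down the range of $n_G$. Connectedness forces $n_G\le m+1$, while $m\le\binom{n_G}{2}$ combined with $m>\binom{n-1}{2}$ gives $\binom{n_G}{2}>\binom{n-1}{2}$, hence $n_G\ge n$. So it suffices to rule out $n_G\ge n+1$.

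For a lower bound on $\rho(G)$, I would bound the Wiener index from below using only the edge count: among the $\binom{n_G}{2}$ vertex pairs, exactly $m$ are at distance $1$ and all others at distance at least $2$, so
\[
W(G)\ge m+2\Big(\tbinom{n_G}{2}-m\Big)=n_G(n_G-1)-m.
\]
Feeding this into the estimate $\rho(G)\ge \frac{2W(G)}{n_G}$ of Lemma~\ref{low-bound} gives $\rho(G)\ge 2(n_G-1)-\frac{2m}{n_G}$. The right-hand side is increasing in $n_G$ (its derivative is $2+2m/n_G^2>0$), so under $n_G\ge n+1$ it is at least $2n-\frac{2m}{n+1}$. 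A direct check using $2m=(n-1)(n-2)+2s\le n^2-n<n^2-1$ then shows $2n-\frac{2m}{n+1}>n+1$, whence $\rho(G)>n+1$.

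For the comparison graph I would take $G^{\ast}=P_{n,s+1}$, the complement of a union of $s+1$ disjoint paths on $n$ vertices, which has exactly $m$ edges. Since a forest with $s+1\ge 2$ components is disconnected, its complement is connected of diameter at most $2$, and because each path has maximum degree at most $2$ the minimum degree satisfies $\delta(G^{\ast})\ge n-3$. For a graph of diameter $2$ each transmission equals $2(n-1)-\delta_{G^{\ast}}(v)$, so Lemma~\ref{low-bound} yields $\rho(G^{\ast})\le Tr_{\max}(G^{\ast})=2(n-1)-\delta(G^{\ast})\le n+1$; the degenerate case $s=n-1$, where $G^{\ast}=K_n$ and $\rho=n-1$, is even smaller. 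Combining, $\rho(G^{\ast})\le n+1<\rho(G)$, contradicting the minimality of $G$, and therefore $n_G=n$.

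The step I expect to need the most care is the construction side: one must confirm that $P_{n,s+1}$ is genuinely connected of diameter exactly $2$ with $\delta\ge n-3$ across the whole range $1\le s\le n-1$, and verify the few small orders ($n=3,4$) directly, since it is the combination of the ``disconnected complement has diameter $\le 2$'' shortcut with the crude transmission bound that makes the single numerical inequality $2m<n^2-1$ decisive.
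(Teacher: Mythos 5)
Your proof is correct and follows essentially the same route as the paper: both lower-bound $\rho(G)$ for $n_G\ge n+1$ via $W(G)\ge m+2\bigl(\binom{n_G}{2}-m\bigr)$ and Lemma~\ref{low-bound}, and compare against $\rho(P_{n,s+1})$ bounded above by its maximum transmission. The only (harmless) difference is that you use the single uniform bound $\rho(P_{n,s+1})\le 2(n-1)-\delta(P_{n,s+1})\le n+1$ for all $s$, whereas the paper splits $\rho(P_{n,s+1})$ into cases by the value of $s$ (and treats $s=n-1$ separately via $Tr_{\min}$), refinements it needs later but not for this claim.
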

\begin{proof}
Since   ${n-1\choose 2} <m\leq {n\choose 2}$,
we have $n_G\geq n$.
As $\rho(G)$ is as small as possible and $P_{n,s+1}\in \mathbb{G}(m)$, we have $\rho(G)\leq \rho(P_{n,s+1})$.
Note that by Lemma~~\ref{low-bound},
\[\rho(P_{n,s+1})\left\{\begin{array}{ll}
                                        =Tr_{\max}(P_{n,s+1})=n-1& \hbox{if $s=n-1$,}
                                       \\<Tr_{\max}(P_{n,s+1})=n & \hbox{if $\frac{n-1}{2}\leq s\leq n-2$,} \\
                                        =Tr_{\max}(P_{n,s+1})=n & \hbox{if $s=\frac{n-2}{2}$ and $n$ is even,}\\
                                        <Tr_{\max}(P_{n,s+1})=n+1  & \hbox{if $1\leq s\leq \frac{n-3}{2}$.}
                                      \end{array}
                                    \right.
\]
If $s=n-1$, then $G\cong K_n=P_{n,n}$,
as otherwise, $n_G\geq n+1$, so we have by Lemma~\ref{low-bound} that
\[\rho(G)\geq Tr_{\min}(G)\geq n_G-1\geq n>n-1=\rho(P_{n,n}),\]
 contradicting the choice of $G$.
Assume  $1 \leq s\leq n-2$.
If $n_G\geq n+1$,
then by Lemma~\ref{low-bound},
\begin{align*}
\rho(G)\geq \frac{2W(G)}{n_G}&\geq  \frac{2\left(m+2\left({n_G\choose 2}-m\right)\right)}{n_G}\\
&=\frac{4{n_G\choose 2}-2m}{n_G}\\
&=2(n_G-1)-\frac{2m}{n_G}\\
& \ge  2n-\frac{n(n-1)}{n+1}\\
& > n+1\\
& >\rho(P_{n,s+1}),
\end{align*}
a contradiction.
Therefore $n_G=n$.
\end{proof}

Suppose first that $s\geq  \frac{n-1}{2} $.
If $\Delta(G)\leq n-2$, then we have by Lemma~\ref{low-bound} that
\[\rho(G)\geq Tr_{\min}(G)\geq \Delta(G)+2(n-1-\Delta(G))\geq n>\rho(P_{n,s+1}), \]
a contradiction. So $\Delta(G)=n-1$. This proves (i).

Suppose next that $s=\frac{n-2}{2}$ and $n$ is even.
If $\Delta(G)\leq n-3$, then we have by Lemma~\ref{low-bound} that
\[\rho(G)\geq Tr_{\min}(G)\geq \Delta(G)+2(n-1-\Delta(G))\geq n+1>\rho(P_{n,s+1}), \]
a contradiction.
Suppose that $\Delta(G)= n-1$.
Then $G\cong K_1\vee H$, where $H$ is a graph of order $n-1$ with $m-n+1$ edges.
Since $m(G)=m(P_{n,\frac{n}{2}})$ and each vertex of $P_{n,\frac{n}{2}}$ is of degree $n-2$,
the minimum degree of $G$ is at most $n-3$.
Let $x=x(P_{n,\frac{n}{2}})$. By Lemma~\ref{permutation}, $x=\frac{1}{\sqrt{n}}(1,1,\ldots,1)^\top$.
Then we have
\begin{align*}
\rho(G)\geq x^\top D(G)x=\rho(P_{n,\frac{n}{2}}).
\end{align*}
Suppose that $\rho(G)=\rho(P_{n,\frac{n}{2}})$.
Then $x=x(G)$.
By the $(\rho(G),x)$-eigenequation at a vertex with minimum degree, we have
\[
{\rho(G)}={2n-2-\delta(G)}
\geq {2n-2-(n-3)}
={n+1}
>{\rho(P_{n,\frac{n}{2}})},
\]
also a contradicton.
It follows that $\Delta(G)=n-2$. Then $2m=n(n-2)=n\Delta$, so $\Delta(G)=\delta(G)=n-2$. This is (ii).

Now suppose that $1\leq s\leq  \frac{n-3}{2}$.
Let $v_1, \ldots, v_n$ be the  vertices of $G$,  where $\delta_G(v_1)\geq \cdots \geq \delta_G(v_n)$.

\begin{Claim} \label{njian2}  $\Delta(G)=n-2$.
\end{Claim}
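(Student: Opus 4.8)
The statement bundles two inequalities, $\Delta(G)\ge n-2$ and $\Delta(G)\le n-2$, and I would prove them separately. The lower bound is the routine one. Suppose $\Delta(G)\le n-3$. Then every vertex $v$ has at least $n-1-\delta_G(v)\ge 2$ vertices at distance at least two, so $Tr_G(v)\ge \delta_G(v)+2\bigl(n-1-\delta_G(v)\bigr)=2(n-1)-\delta_G(v)\ge n+1$. Hence $Tr_{\min}(G)\ge n+1$, and Lemma~\ref{low-bound} gives $\rho(G)\ge Tr_{\min}(G)\ge n+1$. Since $G$ is a minimizer and $P_{n,s+1}\in\mathbb{G}(m)$, we have $\rho(G)\le\rho(P_{n,s+1})$, while the estimate $\rho(P_{n,s+1})<n+1$ recorded earlier holds throughout the range $1\le s\le\frac{n-3}{2}$; this contradiction forces $\Delta(G)\ge n-2$.

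For the upper bound I argue by contradiction, assuming $\Delta(G)=n-1$. A vertex of degree $n-1$ dominates $G$, so $G$ has diameter $2$; consequently, writing $A$ for the adjacency matrix, $J$ for the all-ones matrix and $I$ for the identity, we have $D(G)=J-I+A(\overline{G})$. With $x=x(G)$ and $\sigma=\sum_{v}x_v$, the $(\rho(G),x)$-eigenequation becomes $(\rho(G)+1)x_v=\sigma+\sum_{w\in N_{\overline{G}}(v)}x_w$ for every $v$. The dominating vertex $v_1$ is isolated in $\overline{G}$, so $(\rho(G)+1)x_{v_1}=\sigma$, and positivity of the remaining terms forces $x_{v_1}=\min_v x_v$. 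Moreover $\overline{G}$ has $\binom{n}{2}-m=n-1-s\ge 2$ edges, so besides $v_1$ it contains at least one nontrivial component.

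The plan is to relocate complement-edges so as to absorb $v_1$ into a nontrivial component while keeping the edge count fixed, producing a graph $G^{\ast}\in\mathbb{G}(m)$ of diameter $2$ from which $G$ is recovered by a single move of the type in Lemma~\ref{trans1} directed into $v_1$; because $x_{v_1}$ is smallest, that move strictly increases the distance spectral radius, giving $\rho(G)>\rho(G^{\ast})$ and contradicting minimality. When $\overline{G}$ is a forest this can be shortcut: a forest on $n$ vertices with $n-1-s$ edges has exactly $s+1$ components, an isolated vertex makes $\overline{G}\not\cong\overline{P_{n,s+1}}$ (each path of $\overline{P_{n,s+1}}$ has order at least two here), and Theorem~\ref{forest-min} then yields $\rho(G)>\rho(P_{n,s+1})$, the required contradiction. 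The genuine difficulty is the general case, where $\overline{G}$ may carry cycle components; there I would pick a longest path, or a cycle edge, incident to the component to be modified, move its end onto $v_1$, and compare the distance spectral radii. The main obstacle is exactly the one encountered in the proof of Theorem~\ref{forest-min}: the inequality needed between the relevant Perron entries of $G^\ast$ cannot be read off from the eigenequation at a single vertex, so I expect to propagate the comparison along the path by induction on the distance from the moved end, at each step identifying $G$ with an isomorphic copy obtained by a two-edge swap and using the eigenequation to exclude equality, precisely as in Claims~\ref{ca1} and~\ref{ca2}.
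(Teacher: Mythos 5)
Your lower bound $\Delta(G)\ge n-2$ is correct (the paper gets it even more cheaply: $\Delta(G)\le n-3$ forces $2m\le n(n-3)<2\binom{n-1}{2}$, contradicting $m>\binom{n-1}{2}$ with no spectral input at all), and your shortcut for the case where $\overline{G}$ is a forest with an isolated vertex via Theorem~\ref{forest-min} matches the paper's final step. The gap is in the general case of the upper bound, which you only sketch. To conclude $\rho(G)>\rho(G^{\ast})$ from Lemma~\ref{trans1} you must verify the Perron-entry inequality in $x(G^{\ast})$ --- the vector of the graph you move \emph{from} --- whereas your observation that $x_{v_1}$ is minimal concerns $x(G)$ and is therefore not usable: plugging $x(G)$ into the Rayleigh quotient of $D(G^{\ast})$ only yields $\rho(G^{\ast})\ge\rho(G)+(\text{nonpositive})$, which gives nothing. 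You acknowledge this, but your proposed fix (propagate the comparison along a path by induction as in Claims~\ref{ca1} and~\ref{ca2}) does not obviously apply here: when $\overline{H}$ has a vertex $u$ of degree at least $3$ there is no canonical path to induct along, and the two-edge swaps need not return an isomorphic copy of $G$, which is what makes the inductions in Claims~\ref{ca1} and~\ref{ca2} close.

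The paper fills exactly this hole with two separate devices that your sketch is missing. First, when $\Delta(\overline{H})\ge 3$, it forms $G'=G-u_1v_1+uu_1$ (so $u$ keeps at least two neighbours $u_2,u_3$ in $\overline{G'}$), writes the $(\rho(G'),x)$-eigenequations at $u_1,u_2,u_3$ to get $(\rho(G')+1)(x_{u_2}+x_{u_3}-x_{u_1})\ge 2x_u>0$, hence $\sum_{w\in N_{\overline{G'}}(u)}x_w>x_{u_1}$, and then compares the eigenequations at $u$ and $v_1$ to conclude $x_u>x_{v_1}$ \emph{in $x(G')$}; this is a direct, non-inductive computation exploiting the multiplicity of neighbours. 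Second, once $\Delta(\overline{H})\le 2$, a cycle component is eliminated by a move whose result has a path-reversing automorphism, so Lemma~\ref{permutation} gives the needed equality $y_{v_1}=y_{u_2}$ of Perron entries for free. Without these (or equivalent) arguments your proof of $\Delta(G)\le n-2$ is incomplete.
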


\begin{proof}
If $\Delta(G)\leq n-3$, then \[m\leq \frac{n(n-3)}{2}<{n-1\choose 2},\]
a contradiction. Thus $\Delta(G)=n-2,n-1$.

Suppose that $\Delta(G)=n-1$. Let $a_1$ be the number of vertices with degree $n-1$ in $G$.
Then $G\cong K_{a_1}\vee H$ for some graph $H$ of order $n-a_1$  with size $m-{a_1 \choose 2}-a_1(n-a_1)$.

Suppose that $\Delta(\overline{H})\geq 3$.
Let $u\in V(H)$ with $\delta_{\overline{H}}(u)=\Delta(\overline{H})$, and $u_1$ be a neighbor $u$ in $\overline{H}$ such that $\delta_{\overline{H}}(u_1)$ is as small as possible.
Let $G'=G-u_1v_1+uu_1$, $x=x(G')$ and $\sigma=\sum_{w\in V(G')}x_w$.
Let $u_2$ and $u_3$ be two neighbors  of $u$ in $\overline{G'}$. 
F$i=1,2,3$, we have from the $(\rho(G'),x)$-eigenequation at $u_i$ that
\begin{align*}
\rho(G')x_{u_i}&=\sigma-x_{u_i}+\sum_{w\in N_{\overline{G'}}(u_i)}x_w, 
\end{align*}
so
\begin{align*}
(\rho(G')+1)(x_{u_2}+x_{u_3}-x_{u_1})
&=\sum_{w\in N_{\overline{G'}}(u_2)}x_w+\sum_{w\in N_{\overline{G'}}(u_3)}x_w+\sigma-\sum_{w\in N_{\overline{G'}}(u_1)}x_w\\ &\geq  2x_{u}>0.
\end{align*}
Then $\sum_{w\in N_{\overline{G'}}(u)}x_w-x_{u_1}\geq x_{u_2}+x_{u_3}-x_{u_1}>0$.
By the $(\rho(G'),x)$-eigenequation at $u$ and $v_1$,
\begin{align*}
\rho(G')x_{u}&=\sigma-x_u+\sum_{w\in N_{\overline{G'}}(u)}x_w,\\
\rho(G')x_{v_1}&=\sigma-x_{v_1}+x_{u_1}.
\end{align*}
So  $(\rho(G')+1)(x_{u}-x_{v_1})=\sum_{w\in N_{\overline{G'}}(u)}x_w-x_{u_1}>0$, i.e., $x_{u}>x_{v_1}$.
By Lemma~\ref{trans1}, we have $\rho(G)>\rho(G')$, a contradiction.
Thus $\Delta(\overline{H})\leq 2$.
It follows that each component of $\overline{H}$ is either a cycle or a path.

Suppose there is a cycle component, say $H_1:=u_1u_2\ldots u_{n_1}u_1$ in $\overline{H}$,
where $n_1$ is the order of $H_1$ and $n_1\geq 3$.
Let $G''=G-u_1v_1+u_1u_2$ and $y=x(G'')$.
By Lemma~\ref{permutation}, $y_{v_1}=y_{u_2}$.
By Lemma~\ref{trans1}, we have $\rho(G)>\rho(G'')$, a contradiction.
So each component of $\overline{H}$ is a path.
Thus each component of $\overline{G}$ is a path.
It follows that $\overline{G}$ is a forest with $s+1$ components. Note that $G\ncong P_{n,s+1}$ since $\Delta(P_{n,s+1})=n-2$.
By Theorem~\ref{forest-min}, $\rho(G)>\rho(P_{n,s+1})$, a contradiction.
Therefore $\Delta(G)=n-2$.
\end{proof}

By Claim~\ref{njian2}, each vertex is  incident to at least one edge in  $\overline{G}$, so each component in $\overline{G}$ is nontrivial.
Let $t$ be the number of components in $\overline{G}$.
Let $H_i$ with $i=1,\ldots, t$ be the component of order $n_i$ in $\overline{G}$,
where $\sum_{i=1}^tn_i=n$ and $n_i\geq 2$ for $i=1,\ldots,t$.

If some component $\overline{G}$ contains a cycle, assuming that $H_i$ contain a cycle in $\overline{G}$ for  $i=1,\ldots,c$, where $c$ is the number of components containing a cycle in $\overline{G}$, then
\begin{align*}
 n-1-s=\sum_{i=1}^t|E(H_i)|=&\sum_{i=1}^c|E(H_i)|+\sum_{i=c+1}^t|E(H_i)|\\
\geq &\sum_{i=1}^cn_i+\sum_{i=c+1}^t(n_i-1)=n-(t-c),
\end{align*}
so we have $t-c\geq s+1$, i.e., $c\le t-s-1$ with equality if and only if  $H_i$ for each $i=1,\ldots,c$ is a unicyclic graph.

By similar argument as  Claim~\ref{ca1} in the proof of Theorem~\ref{forest-min}, we have

\begin{Claim} \label{njian4} $H_i$  is a path for $i=c+1,\ldots,t$.
\end{Claim}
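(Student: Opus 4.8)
The plan is to argue by contradiction, reproducing the path-extension argument of Claim~\ref{ca1} inside a single tree component, and to observe that the presence of the cyclic components $H_1,\dots,H_c$ causes no difficulty. Suppose some tree component $H_i$ with $i\in\{c+1,\dots,t\}$ is not a path; after relabelling the tree components we may assume $i=c+1$. Being a tree that is not a path, $H_{c+1}$ has a vertex of degree at least $3$. As in Claim~\ref{ca1}, I would pick a leaf $v$ and a vertex $u$ of degree at least $3$ in $H_{c+1}$ with $d_{H_{c+1}}(v,u)$ maximal; this choice guarantees that at least two pendant paths $P=u_0u_1\dots u_p$ and $Q=v_0v_1\dots v_q$ with $u_0=v_0=u$ and $p\ge q\ge 1$ emanate from $u$ with no further branching along them.

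The transformation is $H'_{c+1}=H_{c+1}-v_qv_{q-1}+v_qu_p$, giving $G'=\overline{H_1\cup\cdots\cup H'_{c+1}\cup\cdots\cup H_t}$. This move keeps $H'_{c+1}$ a tree on the same vertices with the same number of edges, leaves $H_1,\dots,H_c$ (hence their cycles) untouched, and does not change the number of components. Consequently $G'\in\mathbb{G}(m)$ has order $n$, and since $\overline{G'}$ still has $t\ge 2$ components, $G'$ has diameter two, exactly as $G$ does. The key observation making the reduction clean is that in a diameter-two graph the $(\rho,x)$-eigenequation at a vertex $w$ is $\rho x_w=\sigma-x_w+\sum_{z\in N_{\overline{G}}(w)}x_z$ with $\sigma=\sum_z x_z$, so it depends only on the global sum $\sigma$ and the local complement-neighbourhood of $w$; the cyclic components enter only through $\sigma$ and otherwise play no role, so the entire comparison localizes to $H_{c+1}$ precisely as in Claim~\ref{ca1}.

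Writing $x=x(G')$, I would then run the same induction as in Claim~\ref{ca1} to prove $x_{u_{p-i}}>x_{v_{q-1-i}}$ for $i=0,1,\dots,q-1$. The base case $x_{u_p}>x_{v_{q-1}}$ follows from the minimality of $\rho(G)$ via Lemma~\ref{trans1}: if it failed, Lemma~\ref{trans1} would give $\rho(G)>\rho(G')$, contradicting that $\rho(G)$ is as small as possible. For the inductive step one performs a two-edge swap between the two paths whose Rayleigh-quotient difference factors as a product of two Perron-entry differences, namely $(x_{v_{q-1-(i+1)}}-x_{u_{p-(i+1)}})(x_{u_{p-i}}-x_{v_{q-1-i}})$, with the strict inequality in the equality case forced by an eigenequation comparison at $u_{p-(i+1)}$. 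Taking $i=q-1$ yields $x_{u_{p-q+1}}>x_u$, and a final application of Lemma~\ref{trans1}, redistributing the remaining complement-neighbours of $u$ to $u_{p-q+1}$, gives $\rho(G)>\rho(G')$, contradicting the minimality of $\rho(G)$.

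The main point to watch, and the only real obstacle, is verifying that every intermediate graph produced by these swaps again has diameter two, so that both Lemma~\ref{trans1} and the diameter-two form of the eigenequation remain valid at each step. This holds throughout because each intermediate graph is again the complement of a disjoint union of paths, trees and the fixed cyclic components on $n$ vertices, and this union always has at least two components; hence no swap can accidentally connect the complement and inflate the diameter beyond two.
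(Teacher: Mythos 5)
Your proposal is correct and follows essentially the same route as the paper, which disposes of this claim in one line by appealing to the argument of Claim~\ref{ca1}; you simply spell that argument out and add the (correct) observation that the cyclic components $H_1,\dots,H_c$ enter the diameter-two eigenequations only through the global sum $\sigma$, so the comparison localizes to the tree component being modified. No gaps.
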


\begin{Claim} \label{njian5} $\delta(G)=n-3$.
\end{Claim}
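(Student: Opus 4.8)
The plan is to work in the complement. Since $\Delta(G)=n-2$ by Claim~\ref{njian2}, every vertex of $G$ has a non-neighbour, and because $\overline G$ has only $\binom n2-m=n-1-s<n-1$ edges it is disconnected, so it has at least two components; hence any two non-adjacent vertices of $G$ share a common neighbour and $G$ has diameter two. As $\delta(G)=n-1-\Delta(\overline G)$, proving $\delta(G)=n-3$ amounts to proving $\Delta(\overline G)=2$. The bound $\Delta(\overline G)\ge 2$ (that is, $\delta(G)\le n-3$) is immediate: $\overline G$ has $n-1-s$ edges and $s<\frac{n-2}{2}$ gives $n-1-s>\frac n2$, whereas a graph of maximum degree at most $1$ has at most $\lfloor n/2\rfloor$ edges. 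So the real content of the claim is the reverse inequality $\Delta(\overline G)\le 2$.

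Suppose instead that some vertex $u$ satisfies $\delta_{\overline G}(u)=\Delta(\overline G)\ge 3$. By Claim~\ref{njian4} the path components $H_{c+1},\dots,H_t$ have maximum degree at most $2$, so $u$ lies in a cyclic component, say $H_1$. I would detach one branch at $u$ and graft it onto a path component: choose a neighbour $u_1$ of $u$ in $\overline G$ of smallest degree (as in Claim~\ref{njian2}, so that $x_{u_1}$ is forced to be small) together with an endpoint $v_1$ of a path component, which exists since $t-c\ge s+1\ge 2$. As $u_1\in H_1$ and $v_1$ lies in a different component, $u_1v_1\in E(G)$, while $uu_1\in E(\overline G)$; hence $G'=G-u_1v_1+uu_1$, i.e. $\overline{G'}=\overline G-uu_1+u_1v_1$, is a simple graph of the same size $m$. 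Since $\overline{G'}$ still has at least two components, $G'$ again has diameter two and lies in $\mathbb G(m)$, and $\delta_{\overline{G'}}(u)=\Delta(\overline G)-1\ge 2$.

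Writing $x=x(G')$, $\rho'=\rho(G')$ and $\sigma=\sum_w x_w$, the $(\rho',x)$-eigenequation in a diameter-two graph reads $\rho' x_w=\sigma-x_w+\sum_{z\in N_{\overline{G'}}(w)}x_z$, i.e. $(\rho'+1)x_w=\sigma+\sum_{z\in N_{\overline{G'}}(w)}x_z$. Applying this at $u$, which keeps at least two neighbours $u_2,u_3,\dots$ in $\overline{G'}$, and at $v_1$, whose neighbours in $\overline{G'}$ are its path-neighbour $z$ and $u_1$, gives $(\rho'+1)(x_u-x_{v_1})=\sum_{i\ge 2}x_{u_i}-x_z-x_{u_1}$. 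Once this quantity is shown to be nonnegative we have $x_u\ge x_{v_1}$, and then Lemma~\ref{trans1} applied to the reverse move $G=G'-uu_1+u_1v_1$ (with $S=\{u_1\}$, transferring the edge from $u$ to $v_1$) yields $\rho(G)>\rho(G')$, contradicting the minimality of $\rho(G)$ because $G'\in\mathbb G(m)$. Hence $\Delta(\overline G)\le 2$; combined with $\Delta(\overline G)\ge 2$ this forces $\Delta(\overline G)=2$, so each cyclic component is $2$-regular, i.e. a cycle, and $\delta(G)=n-3$.

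The crux is the Perron-entry comparison $x_u\ge x_{v_1}$, equivalently $\sum_{i\ge 2}x_{u_i}\ge x_z+x_{u_1}$. In Claim~\ref{njian2} the analogous step was easy because the reattachment vertex was isolated in $\overline G$ and therefore acquired a single neighbour; here $v_1$ ends up with two neighbours, so the inequality is genuinely borderline. I expect to secure it by choosing the branch at $u$ and the endpoint $v_1$ carefully — for instance taking $u$ to be a degree-$\ge 3$ vertex of $H_1$ nearest a leaf and $u_1$ the first vertex of a pendant path, and selecting $v_1$ so as to minimise $x_z+x_{u_1}$ — together with an inductive comparison of the entries along the two relevant paths, in the spirit of the path-straightening argument of Theorem~\ref{forest-min} (Claim~\ref{ca1}). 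A short case analysis will also be needed according to whether deleting $uu_1$ splits $H_1$ or merely trims a pendant branch, in order to confirm that the diameter-two property and the eigenequation bookkeeping are unaffected.
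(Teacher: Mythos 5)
Your proposal is incomplete at exactly the point you yourself flag as ``the crux'': the inequality $x_u\ge x_{v_1}$, i.e.\ $\sum_{i\ge 2}x_{u_i}\ge x_z+x_{u_1}$, is never established, only conjectured to be provable by ``choosing the branch carefully'' and ``an inductive comparison''. Since the entire contradiction hinges on this comparison, what you have is a plan, not a proof. The difficulty is not cosmetic: in Claim~\ref{njian2} the analogous estimate succeeds because the recipient vertex $v_1$ there is dominating in $G$ (isolated in $\overline G$), so after the switch it acquires a \emph{single} neighbour $u_1$ in the complement and the term $\sigma-\sum_{w\in N_{\overline{G'}}(u_1)}x_w\ge 0$ absorbs the loss; in your setting the recipient $v_1$ is an endpoint of a path component and retains its path-neighbour $z$, so the extra $-x_z$ destroys that slack. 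When $\deg_{\overline G}(u)=3$ the comparison reduces to $x_{u_2}+x_{u_3}\ge x_z+x_{u_1}$ with two terms on each side and no a priori ordering between them, and the ``path-straightening'' induction of Claim~\ref{ca1} does not obviously apply because $u_1$ need not sit on a pendant path of $H_1$ and the switch rewires across two different components. So there is a genuine gap.

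For contrast, the paper avoids any local edge switch and any Perron-entry comparison between individual vertices. It first notes $c\le t-s-1$ and that the components $H_1,\dots,H_{t-s-1}$ together have exactly as many edges as vertices, say $b$ of each; it then replaces all of them by the single cycle $C_b$, obtaining $G'=\overline{C_b\cup H_{t-s}\cup\cdots\cup H_t}\in\mathbb G(m)$. Writing $D(G)=(J-I)+A(\overline G)$ for diameter-two graphs and using that the Perron entries of $x(G')$ on the cycle are all equal (Lemma~\ref{permutation}), the quadratic forms satisfy $x^\top(D(G)-D(G'))x=0$ because the two complements have the same number of edges among vertices of equal weight; hence $\rho(G)\ge\rho(G')$, and equality is excluded by evaluating the eigenequation at a vertex of $\overline G$-degree $n-1-\delta(G)\ge 3$, where the degree drop to $2$ produces a strictly positive discrepancy. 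This global symmetrisation sidesteps precisely the borderline entry comparison on which your argument is stuck; if you want to salvage your route you would need to supply the missing monotonicity argument, whereas adopting the cycle-replacement idea closes the claim in a few lines.
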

\begin{proof}
If $c=0$, then $\overline{G}$ is a forest with $s+1$ components,  so we have by Theorem~\ref{forest-min} that  $G\cong P_{n,s+1}$.
It follows that $\delta(G)=n-3$, as desired.

Assume that $c\geq 1$.
Suppose that $\delta(G)\leq n-4$.
As $c\le t-s-1$, we have
\begin{align*}
\sum_{i=1}^{t-s-1}|E(H_i)|&=n-1-s-\sum_{i=t-s}^t|E(H_i)|\\
&= n-1-s-\sum_{i=t-s}^t(n_i-1)\\
&=\sum_{i=1}^{t-s-1}n_i.
\end{align*}
Let $b=\sum_{i=1}^{t-s-1}n_i$.
Let $G'=\overline{C_b\cup H_{t-s}\cup \cdots\cup H_t}$ and $x=x(G')$. By Lemma~\ref{permutation},
the entry of each vertex on the cycle in $\overline{G'}$ is the same, which we denote by $x_0$.
As we pass from $G$ to $G'$, we have \[\rho(G)-\rho(G')\geq x^\top(D(G)-D(G'))x=0,\]
i.e., $\rho(G)\geq \rho(G')$.
Suppose that $\rho(G)=\rho(G')$.
As $\delta(G)\leq n-4$, we have by Claim~\ref{njian4} that,  for some $i=1,\ldots, c$,
$H_i$ contains a vertex with minimum degree in $G$.
By the $(\rho(G'),x)$-eigenequation and the $(\rho(G),x)$-eigenequation at a vertex with minimum degree in $G$, we have
\begin{align*}
0=\rho(G)x_0-\rho(G')x_0&=(n-3-\delta(G))x_0>x_0>0,
\end{align*}
a contradiction.
So  $\rho(G)>\rho(G')$, a contradiction again.  Thus $\delta(G)=n-3$.
\end{proof}

Combining Claims~\ref{njian2} and \ref{njian5},  $\delta(\overline{G})=1 $  and $\Delta(\overline{G})=2$,
so $H_i$  is a cycle for  each $i=1,\ldots,c$ and $t=c+s+1$.
By  Claim \ref{njian4}, $H_i$ is a nontrivial path for   each  $i=c+1,\ldots,c+s+1$.
We complete the proof of (iii).
\end{proof}

\section{Proofs of Theorem \ref{size2025-result3}}

%
%

For positive integer $a$, let $\widetilde{K}_{2a}$ be the graph obtained from $K_{2a}$ by deleting a perfect matching.
%

\begin{proof}[Proof of Theorems \ref{size2025-result3}]

Let $G$ be the graph that minimizes the distance spectral radius in $\mathbb{G}(m)$.
From Theorem~\ref{size2023-result1=1}, the order of $G$ is $n$.

\noindent{ \bf{Case~1.}} $\frac{n-1}{2}\leq s\leq n-1$.

It is trivial that  $G\cong K_n=P_{n,n}$ if $s=n-1$.

Suppose that $\frac{n-1}{2}\leq s\leq n-2$.
By Theorem~\ref{size2023-result1=1} (i), $\Delta(G)=n-1$.
Let $a$ be the number of vertices of degree $n-1$ in $G$.
If $a\leq 2s+1-n$, then \[m\leq \frac{(n-1)(2s+1-n)+(n-2)(2n-2s-1)}{2}=\frac{n^2-3n+1}{2}+s=m-\frac{1}{2},\]
a contradiction.
So $a\geq 2s+2-n$.
It follows that  $G\cong K_{2s+2-n}\vee H$, where $H$ is a  graph of order  $2n-2s-2$ with size $m-{2s+2-n\choose 2}-(2s+2-n)(2n-2s-2)$.

Suppose that $H\ncong \widetilde{K}_{2n-2s-2}$. Note that $P_{n,s+1}=K_{2s+2-n}\vee \widetilde{K}_{2n-2s-2}$.
Let $x=x(P_{n,s+1})$.
Let $x_1$ be the entry of $x$ corresponding to any vertex of degree $n-1$ in $P_{n,s+1}$,
and $x_2$ be the entry of $x$ corresponding to any vertex of degree $n-2$ in $P_{n,s+1}$.
Construct an unit vector $y$ of order $n$, where the entry of $y$ corresponding to any vertex of  $V(K_{2s+2-n})$ in $G$ is $x_1$, and the entry of $y$ corresponding to any vertex of $V(H)$ in $G$ is $x_2$. So
\begin{align*}
\rho(G)&\geq y^\top D(K_{2s+2-n}\vee H)y
=x^\top D(P_{n,s+1})x
=\rho(P_{n,s+1}),
\end{align*}
i.e., $\rho(G)\geq \rho(P_{n,s+1})$.
Suppose that $\rho(G)=\rho(P_{n,s+1})$. Then $y$ is also the distance Perron vector of $G$.
Label the vertices of $G$ as $v_1, \ldots, v_n$,  where $\delta_G(v_1)\geq \cdots \geq \delta_G(v_n)$.
Since $m(H)=m( \widetilde{K}_{2n-2s-2})$, $H\ncong \widetilde{K}_{2n-2s-2}$ and
 each vertex of $\widetilde{K}_{2n-2s-2}$ is of degree $2n-2s-4$,
 the minimum degree of $H$ is less than $2n-2s-4$.
By the $(\rho(G),y)$-eigenequation at $v_n$, we have
\begin{align*}
\rho(G)x_2&=\rho(G)y_{v_n}\\
&= (2s+2-n)x_1+(2(2n-2s-3)-\delta_H(v_n))x_2\\
&\geq (2s+2-n)x_1+(2(2n-2s-3)-(2n-2s-5))x_2\\
&=(2s+2-n)x_1+(2n-2s-1)x_2\\
& > (2s+2-n)x_1+(2n-2s-2)x_2\\
&=\rho(P_{n,s+1})x_2,
\end{align*}
a contradiction.
Thus  $\rho(G)>\rho(P_{n,s+1})$, which is a contradiction.
So  $H\cong  \widetilde{K}_{2n-2s-2}$.
Therefore $G\cong P_{n,s+1}$, as desired.

\noindent{ \bf{Case~2.}} $\frac{n-5}{2}\leq s\leq\frac{n-2}{2}$.

If $s=\frac{n-2}{2}$, then by Theorem~\ref{size2023-result1=1} (ii), $\Delta(G)=\delta(G)=n-2$,  so $G\cong P_{n,s+1}$.

Suppose that $\frac{n-6}{2}\leq s\leq \frac{n-2}{2}$. By Theorem \ref{size2023-result1=1} (iii), $\Delta(G)=n-2$ and $\delta(G)=n-3$.
Let $a_i$ be the number of vertices with degree $n-i$ in $G$, where $i=2,3$.
Then $a_2+a_3=n$ and $a_2+2a_3=2(n-s-1)$, so $a_2=2s+2$ and $a_3=n-(2s+2)$.
Note that if $\overline{G}$ is a forest, then  $G\cong P_{n,s+1}$ by Theorem~\ref{forest-min}.

If $s=\frac{n-3}{2}$, then $n$ is odd and $n\geq 5$, and as
$a_2=n-1$ and $a_3=1$, we have $G\cong P_{n,\frac{n-1}{2}}$.

If $s=\frac{n-4}{2}$, then $n$ is even and $n\geq 6$, and as
 $a_2=n-2$ and $a_3=2$, we have $G\cong P_{n,\frac{n-2}{2}}$.

Suppose that $s=\frac{n-5}{2}$. Then $n$ is odd and $n\geq 7$.
Since $a_2=n-3$ and $a_3=3$, we get
$G\cong P_{n,\frac{n-3}{2}}$ or $\overline{C_3\cup \frac{n-3}{2}K_2}$.
We need to compare $\rho\left(P_{n,\frac{n-3}{2}}\right)$ and $\rho\left(\overline{C_3\cup \frac{n-3}{2}K_2}\right)$.
If $n=7$, then by a direct calculation, $\rho\left(\overline{C_3\cup \frac{n-3}{2}K_2}\right)\approx 7.4641> 7.4553\approx \rho\left(P_{n,\frac{n-3}{2}}\right)$.
Suppose that $n\geq 9$. Then there are three paths of length $2$, say $v_{i1}v_{i2}v_{i3}$ with $i=1,2,3$ in $\overline{P_{n,\frac{n-3}{2}}}$.
Let $x=x\left(P_{n,\frac{n-3}{2}}\right)$. By Lemma~\ref{permutation}, $x_{v_{i1}}=x_{v_{i3}}$, $x_{v_{i1}}=x_{v_{j1}}$ and $x_{v_{ii}}=x_{v_{jj}}$,
where $i,j=1,2,3$.
Let
\begin{align*}
G'& =P_{n,\frac{n-3}{2}}-\{v_{i2}v_{j2}:1\leq i<j\leq 3\}-\{v_{i1}v_{i3}:i=1,2,3\}\\
&\quad +\{v_{i1}v_{i2}:i=1,2,3\}+\{v_{i2}v_{i3}:i=1,2,3\}.
\end{align*}
Evidently, $G'\cong \overline{C_3\cup \frac{n-3}{2}K_2}$.
Then
\begin{align*}
\frac{1}{2}\left(\rho(G')-\rho\left(P_{n,\frac{n-3}{2}}\right)\right)&\geq \frac{1}{2}x^\top\left(D(G')-D\left(P_{n,\frac{n-3}{2}}\right)\right)x\\
&=3x_{v_{12}}^2+3x_{v_{11}}^2-3x_{v_{11}}x_{v_{12}}-3x_{v_{11}}x_{v_{12}}\\
&=3(x_{v_{11}}-x_{v_{12}})^2\\
&\geq 0,
\end{align*}
so $\rho(G')\geq \rho\left(P_{n,\frac{n-3}{2}}\right)$.
If $\rho(G')= \rho\left(P_{n,\frac{n-3}{2}}\right)$, then $x_{v_{11}}=x_{v_{12}}$.
From the  $\left(\rho\left(P_{n,\frac{n-3}{2}}\right), x\right)$-eigenequation at $v_{11}$ and $v_{12}$,
we have
\[0=\left(\rho\left(P_{n,\frac{n-3}{2}}\right)+1\right)(x_{v_{11}}-x_{v_{12}})=-x_{v_{13}}<0,\]
a contradiction.
Thus $\rho\left(\overline{C_3\cup \frac{n-3}{2}K_2}\right)=\rho(G')>\rho\left(P_{n,\frac{n-3}{2}}\right)$.
Therefore $G\cong P_{n,\frac{n-3}{2}}$.

Finally, suppose that $s=\frac{n-6}{2}$. Then $n$ is even and $n\geq 8$.
Since $a_2=n-4$ and $a_3=4$, we get
$G\cong P_{n,\frac{n-4}{2}}$, $\overline{C_4\cup \frac{n-4}{2}K_2}$ or $\overline{C_3\cup P_3\cup \frac{n-6}{2}K_2}$. Correspondingly,  we have by a direct calculation that $\rho(G)=8.5249$,  $8.5311$,  $8.5283$ if $n=8$,
 $\rho(G)=10.4195$, $10.4244$, $10.4228$ if $n=10$. In either case,  $P_{n,\frac{n-4}{2}}$ uniquely minimizes the distance spectral radius.
%
%
%
%
%
%
%
%
If $n\geq 12$, then by similar argument as above, we have $\rho\left(\overline{C_4\cup \frac{n-4}{2}K_2}\right)>\rho\left(P_{n,\frac{n-4}{2}}\right)$ and $\rho\left(\overline{C_3\cup P_3\cup \frac{n-6}{2}K_2}\right)>\rho\left(P_{n,\frac{n-4}{2}}\right)$.
Thus  $G\cong P_{n,\frac{n-4}{2}}$.
\end{proof}

From the proof of Theorem~\ref{size2025-result3}, we have:
For integers $n$ and $m$ with ${n-1\choose 2}+\max\{\left \lfloor\frac{n-5}{2} \right\rfloor,1\}\leq m\leq {n\choose 2}$,
let $G$ be a graph on $n$ vertices with size $m$.
Then $\rho(G)\geq \rho(P_{n,s+1})$ with equality if and only if $G\cong P_{n,s+1}$, where $s=m-{n-1 \choose 2}$.

\section{Concluding remarks}

By Theorem \ref{size2025-result3}, the graph  $P_{n,s+1}$ uniquely minimizes the distance spectral radius
among $\mathbb{G}(m)$, where $n=\left\lceil\frac{1+\sqrt{8m}}{2}\right\rceil$,
$s=m-{n-1\choose 2}$, and $\max\{\frac{n-6}{2},1\}\leq s\leq n-1$.
The two smallest cases not covered by Theorem \ref{size2025-result3} is  $(m,n,s)=(29,9,1)$ and
$(m,n,s)=(37,10,1)$.
Let $G$ be a graph satisfying the conditions
in Theorem~\ref{size2023-result1=1}. For the former (latter, respectively) case, the possible graph $G$ and  the the value of $\rho(G)$ are listed in Table 1 (Table 2, respectively).
From the two tables, we find that $P_{n,s+1}$ uniquely minimizes the distance spectral radius.
Based on these observations further computer searching results,
So we pose the following conjecture:

Let $G\in \mathbb{G}(m)$, $n=\left\lceil\frac{1+\sqrt{8m}}{2}\right\rceil$ and $s=m-{n-1\choose 2}$, where $m\ge 3$ and $1\leq s\leq \frac{n-6}{2}$. Then  $\rho(G)\geq \rho(P_{n,s+1})$ with equality if and only if $G\cong P_{n,s+1}$.
If it is true, then by Theorem~\ref{forest-min}, to prove the conjecture, it suffices to show that
 each component of $\overline{G}$ is a tree.

\begin{table}[htbp]
\caption{$G$ and $\rho(G)$ when $n=9$ and $s=1$.}\label{T1}
\centering
\smallskip
\renewcommand\arraystretch{1.5}
\begin{tabular}{cccccc}
\toprule[1.5pt]
$G$ & $P_{9,2}$ & $\overline{C_5\cup2K_2}$ & $\overline{C_4\cup P_3\cup K_2}$ &
$\overline{C_3\cup 2P_3}$ & $\overline{C_3\cup P_4\cup P_2}$\\
$\rho (G)$ & $9.5782$ & $9.5826$ & $9.5806$ & $9.5786$ & $9.5804$\\
\bottomrule[1.5pt]
\end{tabular}
\end{table}
\begin{table}[htbp]
\caption{$G$ and $\rho(G)$ when $n=10$ and $s=1$.}\label{T2}
\centering
\smallskip
\renewcommand\arraystretch{1.5}
\begin{tabular}{cccccc}
\toprule[1.5pt]
$G$ & $P_{10,2}$ & $\overline{C_6\cup 2K_2}$ &  $\overline{C_5\cup P_3\cup K_2}$ & $\overline{C_4\cup P_4\cup P_2}$ \\
$\rho(G)$ & $10.6203$ & $10.6235$ & $10.6220$ & $10.6219$\\
\midrule[1.5pt]
$G$ & $\overline{C_4\cup 2P_3}$ & $\overline{2C_3\cup 2K_2}$ & $\overline{C_3\cup P_4\cup P_3}$ & $\overline{C_3\cup P_5\cup P_2}$\\
$\rho(G)$ & $10.6205$ &  $10.6235$ & $10.6204$ & $10.6219$\\
\bottomrule[1.5pt]
\end{tabular}
\end{table}

%

\vspace{5mm}

\noindent {\bf Acknowledgements.}
This work was supported by the National Natural Science Foundation of China (Nos.~12071158 and~11801410),
the Guangdong Basic and Applied Basic Research Foundation (No.~2024A1515010493)
and the Guangzhou Basic and Applied Basic Research Foundation (No.~2024A04J3485). The second author once discussed the problem with Professor Linyuan Lu.

\end{document}